\newtheorem{thm}{Theorem}[section]
\newtheorem{prop}[thm]{Proposition}
\newtheorem{lem}[thm]{Lemma}
\theoremstyle{definition}
\newtheorem{defn}[thm]{Definition}
\newtheorem{exam}[thm]{Example}
\newtheorem{rem}[thm]{Remark}
\newtheorem{theoremx}{Theorem}
\newcommand{\N}{\mathbb N}
\newcommand{\Z}{\mathbb Z}
\newcommand{\R}{\mathbb R}
\newcommand{\C}{\mathbb C}
\newcommand{\V}{\mathbf{V}}
\newcommand{\sing}{\operatorname{Sing}}
\newcommand{\gr}{\operatorname{Gr(d,n)}}
\newcommand{\sd}{C}
\newcommand{\conv}{\operatorname{Conv}}
\newcommand{\Ga}{\Gamma}
\newcommand{\ga}{\gamma}
\newcommand{\xg}{X_{\Gamma}}
\newcommand{\ig}{I_{\Gamma}}
\newcommand{\Nw}{\mathcal N(\Ga)} 
\newcommand{\vn}{V(\mathcal N(\Ga))} 
\providecommand{\keywords}[1]{{\textbf{Keywords:}} #1}
\providecommand{\msc}[1]{{\textbf{MSC:}} #1}
\begin{document}

\title{Nash blowups of normal toric surfaces: the case of one and two segments}

\author{Daniel Duarte \footnote{Research supported by CONAHCYT project CF-2023-G33 and PAPIIT grant IN117523.}, Jawad Snoussi \footnote{Research supported by PAPIIT grant IN117523.}}

\maketitle

\begin{abstract}
We show that iterating Nash blowups resolves the singularities of normal toric surfaces satisfying the following property: the minimal generating set of the corresponding semigroup is contained in one or two segments. We also provide examples with an arbitrary number of segments for which the same result holds.
\end{abstract}

\noindent\keywords{Nash blowups, normal toric surface, resolution of singularities, compact edges.}
\\
\msc{14B05,14E15,14M25.}

\section*{Introduction} 

The Nash blowup of an algebraic variety is a modification that replaces singular points by the tangency at non-singular nearby points. It has been proposed to resolve singularities by iterating Nash blowups or Nash blowups followed by normalization \cite{S,No,GS1}. A great deal of work has been done on these questions \cite{No,R,GS1,GS2,Hi,Sp,GS3,EGZ,GT,GM,D1,At,DG,DN1,DJNB,DDS,CDLL}. In a striking turn of events, the answer turned out to be negative in dimensions four or higher \cite{CDLL}. 

On the other hand, a celebrated theorem of M. Spivakovsky states that iterating normalized Nash blowups gives a resolution of complex surfaces \cite{Sp}. However, very little is known regarding the resolution properties of Nash blowups without normalizing in dimension two. 

In his Ph. D. thesis, V. Rebassoo gave an affirmative answer to Nash's question for the surfaces $\{x^ay^b-z^c=0\}\subset\C^3$, for $a,b,c\in\N$ having no common divisor \cite{R}. In other words, the result holds for toric surfaces whose defining semigroup can be minimally generated by three elements. In this paper we provide new families of toric surfaces for which Nash blowups give a resolution in characteristic zero. Our first main theorem is the following. 

\begin{theoremx}\label{thm intro 1}
Let $\sd\subset\R^2$ be a strongly convex rational polyhedral cone of dimension $2$ and $\Ga$ be the minimal generating set of the semigroup $\sd\cap\Z^2$. Let $\Theta$ be the convex hull of $\sd\cap\Z^2\setminus\{(0,0)\}$. Assume that $\Theta$ has only one or two compact edges. Then the iteration of Nash blowups resolves the singularities of the toric variety defined by $\Ga$.
\end{theoremx}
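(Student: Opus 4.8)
The plan is to translate everything into the combinatorics of $\Ga$ and of the Newton polygon $\Nw$ of the logarithmic Jacobian ideal $\ig=\big(x^{\ga_i+\ga_j}:\det(\ga_i,\ga_j)\neq 0\big)$. Recall that the Nash blowup $\nxg$ is the blowup of $\ig$, and hence is covered by the affine toric charts $X_{\Gab}=\spec\C[\Gab]$ indexed by the vertices $a+b\in\vn$ (with $\{a,b\}\subset\Ga$ the pair realizing the vertex), where $\Gab$ is the semigroup generated by $\Ga$ together with the translates $(\ga_i+\ga_j)-(a+b)$ and $\gab$ denotes its minimal generating set. The first point I would stress is that these charts are in general \emph{not} normal, so the induction must be run on the semigroups $\gab\subset\Z^2$ themselves and not on their saturations. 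The invariant I would use is the one suggested by the abstract: to a finitely generated semigroup $S\subset\Z^2$ spanning a strongly convex cone, attach the pair $\big(s(S),e(S)\big)$, where $s(S)$ is the least number of lines (segments) whose union contains the minimal generators of $S$, and $e(S)=\lvert\det(u,v)\rvert$ is the index of $\operatorname{cone}(S)$, for $u,v$ its primitive ray generators. One checks that $X_S$ is smooth exactly when $e(S)=1$, and the aim is to show that every \emph{singular} chart produced by a Nash blowup has strictly smaller invariant for the lexicographic order on $\big(s,e\big)$, which forces termination.

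The base of the induction is the one-segment case $s=1$. Here I would first observe that a normal semigroup whose generators lie on a single segment is, after a unimodular change of coordinates, $\langle(1,0),(1,1),\dots,(1,n)\rangle$: indeed the compact edge of $\tht$ must lie on the affine line at lattice distance $1$ from the origin, because a parallel supporting line closer to the origin would already pass through a lattice point of the cone, and this pins the configuration down to the cyclic quotient singularity of type $A_n$. For this semigroup a direct computation shows that $\Nw$ has exactly two vertices and that both charts $X_{\gab}$ are smooth; thus a single Nash blowup resolves any one-segment surface, dropping $e$ to $1$.

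The heart of the proof is the two-segment case $s=2$. For a configuration supported on two segments I would enumerate $\vn$ explicitly, separating the vertices coming from pairs $\{a,b\}$ whose members lie on the same segment from those whose members lie on different segments, the delicate ones being the vertices near the common endpoint (the ``kink'') or, once the configuration has already been blown up, near the two parallel segments that typically appear. For each such vertex I would compute $\gab$ and establish the trichotomy: every chart is either smooth, or satisfies $s(\gab)=1$, or satisfies $s(\gab)=2$ with $e(\gab)<e(\Ga)$. Granting this, the lexicographic invariant strictly decreases at each singular chart (a drop $s=2\to s=1$ outweighing any increase of the index, as already happens in examples), the one-segment charts being finished off by the base case; Theorem~\ref{thm intro 1} then follows by well-founded induction.

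The step I expect to be the main obstacle is precisely the verification of this trichotomy, that is, the preservation of the property ``at most two segments'' under one Nash blowup together with the strict drop of the index whenever two segments persist. Two features make it delicate. First, non-normality: the translates $(\ga_i+\ga_j)-(a+b)$ may fail to fill in all lattice points of $\operatorname{cone}(\gab)$, so one must argue directly on the generating set $\gab$ and, in addition, verify that the one-segment charts produced are the \emph{consecutive} (hence normal) configurations to which the base case applies. Second, the bookkeeping at the kink, where one must rule out the creation of a third segment and identify exactly which pair realizes each vertex of $\Nw$. It is here that the hypothesis of having only one or two segments is essential: with three or more segments the contributions of the various pairs to $\vn$ begin to interfere and the clean trichotomy breaks down, which is why the arbitrary-segment situation can be handled only through explicit examples.
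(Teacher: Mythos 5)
Your overall strategy is genuinely different from the paper's: you propose a self-contained well-founded induction on the lexicographic invariant $(s,e)$ (number of covering segments, index of the cone), whereas the paper performs only \emph{one} explicit Nash blowup and then shows that every resulting chart is either smooth, or defined by a $3$-generated semigroup, or equal to a chart of the Nash blowup of an auxiliary $3$-generated semigroup $\hat\Ga$; termination is then entirely delegated to Rebassoo's theorem on the surfaces $x^ay^b-z^c=0$. The problem is that the single step you yourself identify as ``the main obstacle'' --- the trichotomy asserting that every singular chart has $s=1$, or has $s=2$ with strictly smaller index $e$ --- is precisely the hard content, it is nowhere verified, and there is no evidence it is true in the generality your induction requires. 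If it were provable by the direct computation you sketch, it would in particular reprove the termination of Nash blowups for all $3$-generated toric surfaces, i.e.\ the substance of Rebassoo's thesis; the paper's whole point is to \emph{avoid} having to control the iteration beyond the first step. Concretely, after one blowup the charts are non-normal and their minimal generators need not sit on two \emph{adjacent} compact edges: e.g.\ in the paper's Proposition 2.3 (Case 1) the chart at $\ga_{k+1}+\ga_{k+2}$ is generated by $\{(0,-1),(-a,-1-b),(1,k),(1+a,k+b)\}$, whose convex hull has three compact edges (two of them parallel). Your induction must therefore establish the descent for this much larger class of non-normal configurations, not just for the saturated two-edge cones of the theorem, and nothing in the proposal does this.

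A second, independent gap is the base case. Your $s=1$ argument classifies only the \emph{normal} one-segment semigroups (the $A_n$ configurations $\{(1,0),\dots,(1,n)\}$), but the one-segment charts produced by your induction are in general non-normal, and for those the conclusion can fail to follow by the same computation: the paper's remark after Theorem 2.1 notes explicitly that the one-segment argument does not apply to $\Ga=\{(1,0),(1,1),(1,3),(1,4)\}$. You flag that one must ``verify that the one-segment charts produced are the consecutive (hence normal) configurations,'' but this verification is again deferred rather than carried out. As it stands the proposal is a plausible research plan, not a proof: both the inductive step and the applicability of the base case rest on unproved claims, and the paper's own argument succeeds precisely because it replaces them with an appeal to Rebassoo's theorem after a single explicitly computed blowup.
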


We want to emphasize that this is the first result since Rebassoo's theorem in 1977 that provides new families of toric surfaces that can be resolved via Nash blowups (without normalizing). In this context, only local uniformization for some valuations were obtained so far \cite{GT,D1}. 

Our method of proof consists essentially in reducing the problem to the situation of Rebassoo's theorem. Firstly, we use the combinatorial description of Nash blowups of toric varieties \cite{GT}. We show that by applying once the combinatorial algorithm to the family described in the theorem, there are three possible outcomes for the semigroup defining each affine chart of the Nash blowup: it is generated by two elements, it is generated by three elements, or it appears as a semigroup obtained after applying once the algorithm to some other semigroup generated by three elements. In the first case we have a non-singular toric surface. The other two cases follow by Rebassoo's theorem.

It is natural to ask whether our techniques apply to saturated semigroups contained in more than two compact edges. We provide an example of a saturated semigroup whose minimal generating set is contained in three compact edges but for which our techniques cannot be applied directly to obtain a resolution. Nevertheless, as an invitation to further investigate the resolution properties of Nash blowups of normal toric surfaces in characteristic zero, we provide a family having an arbitrary number of compact edges that can be resolved via Nash blowups.

\begin{theoremx}\label{thm intro 2}
Consider the Fibonacci sequence: $f_1:=1$, $f_2:=1$, and $f_j:=f_{j-1}+f_{j-2}$, for $j>2$. Let $l\geq 4$ be an even integer. Denote 
$$\Ga=\{(1,0),(f_1,f_2),(f_3,f_4),\ldots,(f_{l-1},f_{l})\}\subset\Z^2.$$
Then two iterations of Nash blowup resolve the singularities of the toric surface defined by $\Ga$.
\end{theoremx}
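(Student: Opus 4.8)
\section*{Proof proposal for Theorem \ref{thm intro 2}}

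The plan is to reduce the whole family, for every even $l\geq 4$, to a \emph{finite} list of affine chart types by exploiting two arithmetic features of the configuration, and then to dispose of these finitely many types by a single further Nash blowup. Write $\ga_0=(1,0)$ and $\ga_j=(f_{2j-1},f_{2j})$ for $j=1,\dots,m$, where $m=l/2$, so that $\Ga=\{\ga_0,\ga_1,\dots,\ga_m\}$. First I would record two identities. Cassini's identity $f_{n-1}f_{n+1}-f_n^2=(-1)^n$ gives, after expanding $f_{2j+2}=f_{2j+1}+f_{2j}$,
\[
\det(\ga_j,\ga_{j+1})=f_{2j-1}f_{2j+2}-f_{2j}f_{2j+1}=f_{2j-1}f_{2j+1}-f_{2j}^2=1 ,
\]
so every pair of consecutive generators is a $\Z$-basis; in particular the segments $[\ga_j,\ga_{j+1}]$ are primitive, they are exactly the compact edges of $\Theta$, and $\Ga$ is the Hilbert basis of the saturated semigroup it generates. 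Second, since the odd- and even-indexed Fibonacci numbers both satisfy $x_{j+1}=3x_j-x_{j-1}$, one gets the vector recurrence $\ga_{j+1}=3\ga_j-\ga_{j-1}$ (valid already from $\ga_2=3\ga_1-\ga_0$). The pay-off is a self-similarity: expressing everything in the basis $B_k=\{\ga_k,\ga_{k+1}\}$ sends the configuration to the fixed, $k$-independent picture $\dots,(8,-3),(3,-1),(1,0),(0,1),(-1,3),(-3,8),\dots$, and the inner corners $w_j=\ga_j+\ga_{j+1}$ to $\dots,(4,-1),(1,1),(-1,4),\dots$.

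Next I would apply the combinatorial algorithm of \cite{GT} once. The affine charts of the Nash blowup are indexed by the corners $w_k=\ga_k+\ga_{k+1}$, $k=0,\dots,m-1$, and the chart semigroup is generated by $\Ga$ together with the differences $w_j-w_k$. Working in the basis $B_k$ and using the standardized picture above, a direct redundancy computation collapses all but a handful of generators: every $\ga_j$ and every far corner lies in the semigroup generated by $(1,0),(0,1)$ and the two neighbouring corner differences $(3,-2)=w_{k-1}-w_k$ and $(-2,3)=w_{k+1}-w_k$. By self-similarity this is the \emph{same} outcome for every interior $k$, while at the two ends one of the neighbours is absent. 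Thus, independently of $l$, only finitely many chart types occur up to isomorphism: the two end charts are the three-generated semigroups $\langle(1,0),(0,1),(-2,3)\rangle$ and its mirror $\langle(1,0),(0,1),(3,-2)\rangle$, and every interior chart is the single non-normal semigroup $\langle(1,0),(0,1),(3,-2),(-2,3)\rangle$, whose minimal generators all lie on the line $x+y=1$.

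It then remains to show that a second application of the algorithm of \cite{GT} makes each of these finitely many surfaces non-singular. The end charts are generated by three elements, so they fall under Rebassoo's theorem \cite{R}; here I would verify by the explicit combinatorial description that a single Nash blowup already resolves them, which is consistent with the fact that the normalization of each is a surface with a single compact edge, covered by Theorem \ref{thm intro 1}. For the interior chart I would compute its logarithmic Jacobian ideal, read off the vertices of the associated Newton polygon, and check chart by chart that the resulting semigroups are generated by lattice bases. Assembling the two steps, after two iterations every chart is smooth, which is the assertion.

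The step I expect to be the genuine obstacle is this last one: proving that the \emph{non-normal} interior chart $\langle(1,0),(0,1),(3,-2),(-2,3)\rangle$ is resolved by exactly \emph{one} further Nash blowup rather than merely eventually. Rebassoo's theorem and Theorem \ref{thm intro 1} guarantee termination but not the sharp two-step count, and the naive ideal-blowup charts of this semigroup are themselves non-normal (some even acquire a torus factor times a singular monomial curve), so the combinatorial algorithm of \cite{GT} must be applied with care to the non-saturated semigroup, keeping track of the semigroup structure and not only of its saturation. Controlling precisely this single non-normal type, and confirming it becomes smooth in one stroke, is where the Fibonacci recurrence $\ga_{j+1}=3\ga_j-\ga_{j-1}$ and the resulting rigidity of the chart must be pushed hardest.
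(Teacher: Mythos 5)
Your overall strategy --- use the recurrence $\ga_{j+1}=3\ga_j-\ga_{j-1}$ to see that, in the basis $B_k=\{\ga_k,\ga_{k+1}\}$, every interior chart of the first Nash blowup is the same semigroup, and then dispose of the finitely many resulting types in one more step --- is essentially the paper's strategy (the paper carries out the uniformization explicitly only for the two end charts, via an automorphism $T$, and handles the interior charts through a Fibonacci identity of Vajda). However, the execution contains a genuine error that invalidates the second half of the argument. The chart semigroup attached to a vertex $\ga_k+\ga_{k+1}$ of $\Nw$ is generated by $\ga_k$, $\ga_{k+1}$ and the differences $\ga_j-\ga_k$, $\ga_j-\ga_{k+1}$ over all other $j$; it is \emph{not} generated by $\Ga$ together with the corner differences $w_j-w_k$. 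In the basis $B_k$ the generators therefore include $\ga_{k-1}-\ga_k=(2,-1)$ and $\ga_{k+2}-\ga_{k+1}=(-1,2)$, neither of which lies in $\N\{(1,0),(0,1),(3,-2),(-2,3)\}$: all four of your alleged generators lie on the line $x+y=1$, so every nontrivial sum of two or more of them satisfies $x+y\geq 2$, while $(2,-1)$ and $(-1,2)$ have $x+y=1$. The correct minimal generating set of the interior chart is the six collinear vectors $(3,-2),(2,-1),(1,0),(0,1),(-1,2),(-2,3)$ (these are the paper's $v_1,\dots,v_6$ expressed in $B_k$), with consecutive determinants equal to $1$.

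This miscomputation matters in two ways. First, the step you single out as the ``genuine obstacle'' --- resolving the non-normal semigroup $\langle(1,0),(0,1),(3,-2),(-2,3)\rangle$ in exactly one further Nash blowup --- is an artifact: the actual interior chart is \emph{normal}, its minimal generators lie on a single compact edge, and Proposition \ref{one segment} already shows that its Nash blowup is non-singular; no analysis of non-saturated semigroups or of logarithmic Jacobian ideals is required. (Likewise the end charts are minimally generated by the four collinear vectors $(1,0),(0,1),(-1,2),(-2,3)$, i.e.\ $(1,0),(1,1),(1,2),(1,3)$ in standard coordinates, not by three elements.) Second, the step you dismiss as ``a direct redundancy computation'' is where the real work lies: one must prove that all the far differences $\ga_j-\ga_k$ and $\ga_j-\ga_{k+1}$ with $j\notin\{k-1,k,k+1,k+2\}$ belong to the semigroup generated by the six vectors above. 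The paper does this via the supporting linear form $L(x,y)=f_{i+1}x-f_iy$, which equals $1$ exactly on the six minimal generators and is $>1$ on every other element of the chart, combined with a convexity argument placing the whole chart inside the cone $\R_{\geq0}(v_1,v_6)$; some such argument is needed and is absent from your proposal. With the chart identification corrected and that redundancy argument supplied, your plan does close, and the conclusion follows from the one-segment case rather than from any delicate study of a non-normal type.
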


The paper is organized as follows. In the first section, we recall the basic definitions we use and describe the combinatorial algorithm that corresponds to the Nash blowup of a toric surface. Section 2 is devoted to proving Theorem \ref{thm intro 1}. This proof is divided in several cases, depending on the configuration of the minimal generating set. In section 3, we prove Theorem \ref{thm intro 2}. Finally, section 4 contains a discussion on the continued fractions associated to the normal toric surfaces of Theorems \ref{thm intro 1} and \ref{thm intro 2}.

\subsection*{Acknowledgments}
We would like to thank the referee for the careful reading and valuable remarks on the paper. These comments were very helpful in improving the presentation and clarifying several proofs.


\section{Nash blowup and toric varieties} 

Let us start by recalling the definition of the Nash blowup of an algebraic variety. 

\begin{defn}
Let $X\subset\C^n$ be an irreducible algebraic variety of dimension $d$. Consider the Gauss map:
\begin{align}
G:X\setminus\sing(X)&\rightarrow \gr\notag\\
x&\mapsto T_xX,\notag
\end{align}
where $\gr$ is the Grassmannian of $d$-dimensional vector spaces in $\C^n$, and $T_xX$ is the tangent space to $X$ at $x$. Denote by $X^*$ the Zariski closure of the graph of $G$. Call $\nu$ the restriction to $X^*$ of the projection of $X\times\gr$ to $X$. The pair $(X^*,\nu)$ is called the Nash blowup of $X$.
\end{defn}

In this paper we investigate the resolution of singularities properties of Nash blowups for certain families of toric surfaces. Let us recall the definition of an affine toric variety (see, for instance, \cite[Section 1.1]{CLS} or \cite[Chapters 4, 13]{St}).

Let $\Gamma=\{\gamma_1,\ldots,\gamma_n\}\subset\Z^d$. 
The set $\Gamma$ induces a homomorphism of semigroups
\begin{align}
\pi_{\Ga}:\N^n\rightarrow\Z^d,\mbox{ }\mbox{ }\mbox{ }\alpha=(\alpha_1,\ldots,\alpha_n)\mapsto \alpha_1\gamma_1+\cdots+\alpha_n\gamma_n. \notag
\end{align}
Consider the ideal
$$\ig:=\langle x^{\alpha}-x^{\beta}|\alpha,\beta\in\N^n,\mbox{ }\pi_{\Ga}(\alpha)=\pi_{\Ga}(\beta)\rangle\subset\C[x_1,\ldots,x_n].$$
We denote $\xg:=\V(\ig)\subset\C^n$ and call it a toric variety defined by $\Gamma$. 

Throughout this paper we use the following notation. Given $\Ga\subset\Z^d$, we denote by $\N\Ga$ the semigroup generated by $\Ga$, by $\Z\Ga$ the group generated by $\Ga$, and by $\R_{\geq0}\Ga$ the cone generated by $\Ga$. We say that the cone $\R_{\geq0}\Ga$ is strictly convex if the only vector subspace contained in $\R_{\geq0}\Ga$ is $\{0\}$. Given a subset $S\subset\R^d$, we denote by $\conv(S)$ its convex hull. Moreover, a hyperplane given by $\{L=0\}$, for a linear map $L:\R^d\to \R$, is said to be a supporting hyperplane of $S$ if $S\subset\{L\geq0\}$.

The following theorem is one of the main ingredients for the results presented here.

\begin{thm}\label{Reb}\cite[Theorem 3.1]{R}
Let $X=\{x^ay^b-z^c=0\}\subset\C^3$, $a,b,c\in\N$ having no common divisor. Then the iteration of Nash blowups resolves the singularities of $X$. In other words, this result applies to toric surfaces $\xg$ defined by $\Ga=\{\gamma_1,\gamma_2,\gamma_3\}\subset\Z^2$ such that $\Z\Ga=\Z^2$ and $\R_{\geq0}\Ga$  strictly convex.
\end{thm}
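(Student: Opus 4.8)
The plan is to pass to the toric reformulation and use an explicit combinatorial description of the Nash blowup, reducing the statement to the termination of a discrete invariant. Write $\Gamma=\{\gamma_1,\gamma_2,\gamma_3\}$ with $\R_{\geq0}\Gamma$ strictly convex, ordered so that $\gamma_1,\gamma_3$ span the two edges of the cone and $\gamma_2$ lies in between. Because three vectors of $\Z^2$ have a rank-one lattice of relations, there is a single relation, say $a\gamma_1+b\gamma_2=c\gamma_3$; the no-common-divisor hypothesis corresponds to $\Z\Gamma=\Z^2$ and makes $I_\Gamma$ the principal ideal generated by $x^ay^b-z^c$ (a height-one ideal in a UFD), so that the two formulations in the statement agree.

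Since the Gauss map is equivariant for the torus action, $X_\Gamma^*$ is again a toric variety, and the first step is to describe it. On the dense torus the tangent plane is spanned by the two vectors whose $k$-th entries are $\gamma_{k,1}t^{\gamma_k}$ and $\gamma_{k,2}t^{\gamma_k}$, so in Pl\"ucker coordinates the Gauss map is $[\det(\gamma_i,\gamma_j)\,t^{\gamma_i+\gamma_j}]_{i<j}$ over the pairs with $\det(\gamma_i,\gamma_j)\neq0$. Hence $X_\Gamma^*$ is the closure of the image of $t\mapsto\big((t^{\gamma_k})_k,[\det(\gamma_i,\gamma_j)\,t^{\gamma_i+\gamma_j}]_{i<j}\big)$, and it is covered by affine toric charts indexed by the vertices of the polygon $\conv\{\gamma_i+\gamma_j\}+\R_{\geq0}\Gamma$: in the chart attached to a vertex $v=\gamma_{i_0}+\gamma_{j_0}$, the coordinate ring is generated over $\C[\N\Gamma]$ by the monomials $x^{\gamma_i+\gamma_j-v}$, so the new semigroup is generated by $\Gamma$ together with the differences $\gamma_i+\gamma_j-v$.

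With only three generators there are at most three sums, hence at most two nonzero differences per chart, so I would go through the cases according to which of $\gamma_1+\gamma_2,\gamma_1+\gamma_3,\gamma_2+\gamma_3$ are vertices and, after extracting a minimal generating set, check that every chart is a toric surface generated by at most three elements. The lattice condition is automatic, since $\Z^2=\Z\Gamma$ is contained in the group generated by the new semigroup, which still lies in $\Z^2$; and the strict convexity of the new cone can be read off from the configuration. Thus each chart is either smooth, where the minimal generating set has two elements spanning $\Z^2$, or it falls back into the hypothesis of the theorem.

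To iterate, I would attach to such a surface a nonnegative integer invariant and show it strictly decreases on every singular chart; natural candidates are the normalized area of $\conv\{0,\gamma_1,\gamma_2,\gamma_3\}$, equivalently $|\det(\gamma_1,\gamma_3)|$ or the exponent data of $\{x^ay^b=z^c\}$, since replacing the sums by their differences $\gamma_i+\gamma_j-v$ contracts the generating configuration. The hard part will be exactly this monotonicity: because one does not normalize, a priori a degenerate chart could reproduce the singularity type without decreasing the invariant, so the crux is to pin down an invariant that is genuinely strictly decreasing and to rule out such stationary charts, together with the boundary configurations (two sums collinear, or $\gamma_2$ lying on an edge) which must be treated separately. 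Once monotonicity is established, finiteness of the iteration is immediate, and the smoothness of the terminal two-generator charts follows from their generating set being a basis of $\Z^2$.
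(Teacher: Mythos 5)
This theorem is not proved in the paper at all: it is Rebassoo's result, imported as a black box from his thesis \cite{R}, so there is no in-paper argument to measure your attempt against. On its own terms, your set-up is fine --- the identification of $\{x^ay^b=z^c\}$ with the $3$-generated toric surface, the Pl\"ucker computation of the Gauss map on the torus, and the chart description via the vertices of $\conv\{\gamma_i+\gamma_j\}+\R_{\geq0}\Gamma$ all match the combinatorial machinery the paper recalls in Section 1.1. But the proof itself has two genuine gaps. The first is the claim that every chart of the Nash blowup is ``a toric surface generated by at most three elements'' and hence falls back into the hypothesis of the theorem: this is false, because the class of $3$-generated toric surfaces is not closed under Nash blowup. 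Take $\Gamma=\{(1,0),(1,2),(1,5)\}$, i.e.\ $X=\{x^3z^2=y^5\}$. The vertices of the Newton polygon are $\gamma_1+\gamma_2=(2,2)$ and $\gamma_2+\gamma_3=(2,7)$, and the chart at $(2,2)$ has semigroup generated by $\{(1,0),(1,2),(0,3),(0,5)\}$; each of these four elements is irreducible in that semigroup, so the chart has embedding dimension $4$ and is not a hypersurface in $\C^3$. Your induction therefore does not close on the stated class. (This failure is precisely why the paper, in Section 2, must sometimes recognize a $4$-generated chart as a chart of the Nash blowup of an auxiliary $3$-generated surface $X_{\hat\Gamma}$, rather than as a $3$-generated surface in its own right.)

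The second gap is the one you flag yourself: the termination argument is entirely missing. You list candidate invariants ($|\det(\gamma_1,\gamma_3)|$, normalized area, the exponents $(a,b,c)$), but you neither fix one nor prove that it strictly decreases on every singular chart, and no such naive invariant is known to work for unnormalized Nash blowups --- establishing a quantity that genuinely drops is exactly the content of Rebassoo's long case analysis, and is the reason this question remained open for other families of toric surfaces until the present paper. As written, your proposal is a correct reduction to the combinatorial algorithm followed by a plan that defers both essential difficulties, so it does not constitute a proof of the statement.
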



\subsection{Combinatorial description of Nash blowups of toric surfaces} 

The Nash blowup of a toric variety can be described combinatorially in terms of the semigroup of the variety. We are interested in  the case of toric surfaces so we present the description in this particular case (see \cite[Section 3.9.2]{Sp2}).

Let $\Ga=\{\ga_1,\cdots,\ga_n\}\subset\Z^2$ be a subset such that $\Z\Ga=\Z^2$ and the cone $\sd=\R_{\geq0}\Ga$ is strictly convex. Let $\Ga_0=\{\ga_i+\ga_j|\det(\gamma_i\mbox{ }\gamma_j)\neq0\}$. Let $\{\ga_{i_0},\ga_{j_0}\}\subset\Ga$ be such that $\ga_{i_0}+\ga_{j_0}\in\Ga_0$. Consider
\begin{align}\label{A}
A(\ga_{i_0})&=\{\gamma_k-\gamma_{i_0}|k\in\{1,\ldots,n\}\setminus\{i_0,j_0\},\mbox{ }\det(\gamma_k\mbox{ }\gamma_{j_0})\neq0\},\notag\\
A(\ga_{j_0})&=\{\gamma_k-\gamma_{j_0}|k\in\{1,\ldots,n\}\setminus\{i_0,j_0\},\mbox{ }\det(\gamma_k\mbox{ }\gamma_{i_0})\neq0\}.
\end{align}
Let $\Ga(\ga_{i_0},\ga_{j_0})=\{\gamma_{i_0},\gamma_{j_0}\}\cup A(\ga_{i_0})\cup A(\ga_{j_0})$. Then the Nash blowup of $\xg$ has an open affine covering given by the toric surfaces defined by $\Ga(\ga_{i},\ga_{j})$, where $\ga_i+\ga_j\in \Ga_0$.

This description can be refined as follows. Let
$$\Nw=\conv(\{(\ga_i+\ga_j)+\sd|\ga_i+\ga_j\in \Ga_0\}).$$
Then 
$$\{X_{\Ga(\ga_i,\ga_j)}|\ga_i+\ga_j\mbox{ is a vertex of }\Nw\}$$ 
is an open affine covering of the Nash blowup of $\xg$. In other words, we do not need to consider all $\ga_i+\ga_j\in \Ga_0$ but only those such that $\ga_i+\ga_j$ is a vertex of $\Nw$ (see \cite[Propositions 32 and 60]{GT}).

Now we consider a case where the vertices of $\Nw$ can be described explicitly. Let $\sd\subset\R^2$ be a strongly convex rational polyhedral cone of dimension $2$. It is known that the minimal generating set of $\sd\cap\Z^2$ can be described as follows. Let $\Theta\subset\R^2$ be the convex hull of $\sd\cap\Z^2\setminus\{(0,0)\}$. Let $\Ga\subset\Z^2$ be the points lying on the compact edges of $\Theta$. Then $\Ga$ is the minimal generating set of $\sd\cap\Z^2$ \cite[Proposition 1.21]{Oda}. 

\begin{prop}\label{vertices}
With the previous notation, let $\Ga=\{\ga_1,\ldots,\ga_n\}\subset\Z^2$ be the minimal generating set of $\sd\cap\Z^2$, ordered counterclockwise. Let $\{\ga_1=\ga_{i_1},\ga_{i_2},\ldots,\ga_{i_r}=\ga_n\}\subset \Ga$, $1=i_1<i_2<\cdots<i_r=n$, be the set of vertices of $\Theta$. Denote by $V(\Nw)$ the vertices of $\Nw$. Then
$$V(\Nw)=\{\ga_1+\ga_2,\ga_{n-1}+\ga_{n}\}\cup\{\ga_{i_j-1}+\ga_{i_j},\ga_{i_j}+\ga_{i_j+1}|1<j<r\}.$$
\end{prop}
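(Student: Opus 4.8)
The plan is to characterize the vertices of $\Nw$ through supporting linear functionals and to show that the only surviving sums are those of consecutive generators adjacent to a vertex of $\Theta$. First I would record two reductions. Since $\Ga$ is the minimal generating set, no two distinct $\ga_i,\ga_j$ lie on a common ray through the origin, so $\det(\ga_i\ \ga_j)\neq 0$ whenever $i\neq j$; hence $\Ga_0=\{\ga_i+\ga_j\mid i\neq j\}$ and $\Nw=\conv\{\ga_i+\ga_j\mid i\neq j\}+\sd$. As $\sd$ is the recession cone of $\Nw$, a point $p=\ga_i+\ga_j$ is a vertex of $\Nw$ if and only if there is a linear functional $L$ with $L>0$ on $\sd\setminus\{0\}$ (equivalently, $L$ in the interior of the dual cone) attaining its minimum over $\Nw$ uniquely at $p$. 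Because $L(\ga_i+\ga_j)=L(\ga_i)+L(\ga_j)$, minimizing $L$ over $\Ga_0$ amounts to selecting the pair of indices realizing the two smallest values among $L(\ga_1),\ldots,L(\ga_n)$.

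The geometric heart is a unimodality lemma: for every such $L$ the sequence $k\mapsto L(\ga_k)$ is unimodal (it weakly decreases and then weakly increases). This I would deduce from convexity of $\Theta$: since $L>0$ on $\sd\setminus\{0\}$, the sublevel set $\Theta\cap\{L\leq c\}$ is a bounded convex polygon, so its intersection with $\partial\Theta$ is connected; as the $\ga_k$ are the lattice points on the compact edges of $\Theta$ listed in order along the boundary chain, the sets $\{k\mid L(\ga_k)\leq c\}$ are intervals, which is exactly unimodality. An immediate consequence is that the two smallest values of $L(\ga_\bullet)$ are attained at consecutive indices, so the only candidates for vertices of $\Nw$ are the consecutive sums $\ga_m+\ga_{m+1}$.

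It then remains to decide which consecutive sums are genuine vertices, and the dichotomy is governed by whether $\{\ga_m,\ga_{m+1}\}$ meets the vertex set of $\Theta$. If both $\ga_m$ and $\ga_{m+1}$ lie in the relative interior of a single compact edge, then $\ga_{m-1},\ga_m,\ga_{m+1},\ga_{m+2}$ are collinear and equally spaced along that edge (consecutive lattice points differ by the primitive edge vector), whence $\ga_m+\ga_{m+1}=\tfrac12\big((\ga_{m-1}+\ga_m)+(\ga_{m+1}+\ga_{m+2})\big)$; thus $\ga_m+\ga_{m+1}$ lies on a segment joining two other points of $\Nw$ and cannot be a vertex. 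Conversely, if $\ga_m$ (say) is a vertex of $\Theta$, I would produce the required functional by taking $L$ in the normal cone of $\Theta$ at $\ga_m$, chosen close to the inner normal of the edge $[\ga_m,\ga_{m+1}]$: such $L$ lies in the interior of the dual cone, is minimized uniquely at $\ga_m$, and has $\ga_{m+1}$ as next-smallest value, so by the previous paragraph $\ga_m+\ga_{m+1}$ is the unique minimizer of $L$ on $\Nw$, hence a vertex. The endpoints are treated identically, using that $\ga_1,\ga_n$ are vertices of $\Theta$, which yields $\ga_1+\ga_2$ and $\ga_{n-1}+\ga_n$. Collecting the surviving sums — those $\ga_m+\ga_{m+1}$ for which $\ga_m$ or $\ga_{m+1}$ is a vertex of $\Theta$ — reproduces exactly the stated set.

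I expect the main obstacle to be the rigorous justification of the unimodality lemma, in particular ruling out a ``peak'' profile for $L(\ga_\bullet)$; here the connectedness of $\Theta\cap\{L\leq c\}$, which relies on $\sd$ being the recession cone together with $L$ strictly positive on $\sd\setminus\{0\}$, is exactly what forces the single-valley shape. A secondary technical point is that the functionals used in the converse direction can be taken in the interior of the dual cone; this holds because the inner normal of a compact edge must be strictly positive on $\sd\setminus\{0\}$, as otherwise that bounded face would contain a recession ray and fail to be compact.
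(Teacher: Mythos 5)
Your proof is correct and follows essentially the same route as the paper: the same midpoint observation (four consecutive lattice points on one compact edge force $\ga_{j+1}+\ga_{j+2}$ into the relative interior of $[\ga_j+\ga_{j+1},\ga_{j+2}+\ga_{j+3}]$) eliminates the non-vertex-adjacent sums, and the same perturbed-supporting-functional argument shows the remaining consecutive sums are vertices of $\Nw$. The only difference is that the paper cites Atanasov et al.\ for the reduction $V(\Nw)\subset\{\ga_i+\ga_{i+1}\}$, whereas you derive it from the unimodality of $k\mapsto L(\ga_k)$; that argument is sound and makes this step self-contained.
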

\begin{proof}
Firstly, it is known that $V(\Nw)\subset\{\ga_i+\ga_{i+1}|1\leq i<n\}$ \cite[Propositions 4.11 and 4.12]{At}. Furthermore, if a compact edge of $\Theta$ contains four consecutive points, say, $\{\ga_{j},\ga_{j+1},\ga_{j+2},\ga_{j+3}\}$, then $\ga_{j+1}+\ga_{j+2}$ is in the relative interior of the segment $[\ga_j+\ga_{j+1},\ga_{j+2}+\ga_{j+3}]$. In particular, $\ga_{j+1}+\ga_{j+2}\notin V(\Nw)$. Hence, 
$$V(\Nw)\subset\{\ga_1+\ga_2,\ga_{n-1}+\ga_{n}\}\cup\{\ga_{i_j-1}+\ga_{i_j},\ga_{i_j}+\ga_{i_j+1}|1<j<r\}.$$

To show the other inclusion, consider first the case $1<j<r$. Let $l\subset\R^2$ be the line joining $\gamma_{i_j}$ and $\gamma_{i_{j+1}}$. Then, a translation of this line is a supporting hyperplane of $\Theta$ and also of $\Nw$. In addition, $l$ reaches its minimum value with respect to $\Nw$ on the segment $[\gamma_{i_j}+\ga_{i_j+1},\gamma_{i_{j+1}-1}+\gamma_{i_{j+1}}]$. By perturbing $l$ slightly, we deduce that $\gamma_{i_j}+\ga_{i_j+1}$ and $\gamma_{i_{j+1}-1}+\gamma_{i_{j+1}}$ are vertices of $\Nw$. A similar argument shows that $\gamma_1+\gamma_2$ and $\ga_{n-1}+\ga_{n}$ are vertices as well.
\end{proof}

\begin{rem}
As before, let $\Ga$ be the minimal generating set of $C\cap\Z^2$. Then any two different vectors in $\Ga$ are linearly independent. Hence, for a vertex $\ga_i+\ga_j$ in $\vn$, the sets defined in (\ref{A}) become
\begin{align}\label{A}
A(\ga_{i})&=\{\gamma-\gamma_{i}|\ga\in\Ga\setminus\{\ga_i,\ga_j\}\},\notag\\
A(\ga_{j})&=\{\gamma-\gamma_{j}|\ga\in\Ga\setminus\{\ga_i,\ga_j\}\}.\notag
\end{align}
We use this remark throughout this paper.
\end{rem}

The following example illustrates the technique we use to prove our main theorem.

\begin{exam}
Let $\Ga=\{\ga_1=(1,0),\ga_2=(1,1),\ga_3=(1,2),\ga_4=(3,7)\}$. This set is the minimal generating set of $\sd\cap\Z^2$, where $\sd=\R_{\geq0}((1,0),(3,7))$. We show that iterating Nash blowups resolves the singularities of $\xg$. Firstly, by Proposition \ref{vertices}, 
$$V(\Nw)=\{\ga_1+\ga_2,\ga_2+\ga_3,\ga_3+\ga_4\}.$$
\begin{itemize}
\item Consider $\ga_1+\ga_2\in V(\Nw)$. Then 
\end{itemize}
\begin{align}
A(\ga_1)&=\{(0,2),(2,7)\},\notag\\
A(\ga_2)&=\{(0,1),(2,6)\}.\notag 
\end{align}
Hence, $\Ga(\ga_1,\ga_2)=\{(1,0),(1,1)\}\cup\{(0,2),(2,7),(0,1),(2,6)\}$. The minimal generating set of $\N\Ga(\ga_1,\ga_2)$ is $\{(1,0),(0,1)\}$. Thus $X_{\Ga(\ga_1,\ga_2)}$ is non-singular.
\begin{itemize}
\item Consider $\ga_2+\ga_3\in \vn$. Then 
\end{itemize}
\begin{align}
A(\ga_2)&=\{(0,-1),(2,6)\},\notag\\
A(\ga_3)&=\{(0,-2),(2,5)\}.\notag 
\end{align}
The minimal generating set of $\N\Ga(\ga_2,\ga_3)$ is $\{(0,-1),(1,2),(2,6)\}$. By Theorem \ref{Reb}, iterating Nash blowup resolves the singularities of $X_{\Ga(\ga_2,\ga_3)}$.
\begin{itemize}
\item Consider $\ga_3+\ga_4\in \vn$. Then 
\end{itemize}
\begin{align}
A(\ga_3)&=\{(0,-2),(0,-1)\},\notag\\
A(\ga_4)&=\{(-2,-6),(-2,-7)\}.\notag 
\end{align}
The minimal generating set of $\N\Ga(\ga_3,\ga_4)$ is $\{(-2,-6),(0,-1),(1,2),(3,7)\}$. Now consider $\hat{\Ga}=\{\ga_2,\ga_3,\ga_4\}$. Then 
$$\hat{\Ga}(\ga_3,\ga_4)=\{(-2,-6),(0,-1),(1,2),(3,7)\}.$$
In particular, $X_{\hat{\Ga}(\ga_3,\ga_4)}=X_{\Ga(\ga_3,\ga_4)}$ is an affine chart of the Nash blowup of $X_{\hat{\Ga}}$. By Theorem \ref{Reb}, iterating Nash blowup resolves the singularities of $X_{\hat{\Ga}}$. In particular, the same holds for $X_{\Ga(\ga_3,\ga_4)}$.

\end{exam}

\section{The main theorem}\label{sect main}

\begin{thm}\label{main}
Let $\sd\subset\R^2$ be a strongly convex rational polyhedral cone of dimension $2$ and $\Ga$ be the minimal generating set of $\sd\cap\Z^2$. Let $\Theta=\conv(\sd\cap\Z^2\setminus\{(0,0)\})$. Assume that $\Theta$ has only one or two compact edges. Then the iteration of Nash blowups resolves the singularities of $\xg$.
\end{thm}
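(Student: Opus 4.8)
The plan is to apply once the combinatorial algorithm of Section 1 and to show that every affine chart of the Nash blowup of $\xg$ falls into one of three classes. By Proposition \ref{vertices} the charts are the toric surfaces $X_{\Ga(\ga_i,\ga_j)}$ with $\ga_i+\ga_j\in\vn$, and for each such vertex the set $\Ga(\ga_i,\ga_j)$ is computed from the sets $A(\ga_i)$ and $A(\ga_j)$. For each chart I would determine the minimal generating set $\La$ of the semigroup $\N\Ga(\ga_i,\ga_j)$ and verify that one of the following holds: (i) $\La$ consists of two vectors forming a $\Z$-basis of $\Z^2$, so the chart is non-singular; (ii) $\La$ consists of three vectors spanning $\Z^2$ with strictly convex cone, so Theorem \ref{Reb} resolves the chart by iterated Nash blowups; or (iii) $\La$ coincides with $\hat\Ga(\ga_i,\ga_j)$ for some three-element, non-collinear $\hat\Ga$ with $\Z\hat\Ga=\Z^2$ and strictly convex cone, so that the chart is itself an affine chart of the Nash blowup of $X_{\hat\Ga}$; then Theorem \ref{Reb} resolves $X_{\hat\Ga}$, hence also the chart. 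Since resolving each chart of the Nash blowup resolves the Nash blowup, and resolution of singularities is a local question, this yields the theorem.

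When $\Theta$ has a single compact edge all generators are collinear, say $\ga_k=\ga_1+(k-1)v$ with $v$ primitive and $\det(\ga_1,v)=\pm1$ (because $\Z\Ga=\Z^2$). Here $\vn=\{\ga_1+\ga_2,\ga_{n-1}+\ga_n\}$, and a direct computation of the sets $A(\ga_1),A(\ga_2)$ shows that every element of $\Ga(\ga_1,\ga_2)$ is a non-negative integer combination of $\ga_1$ and $v$; thus $\La=\{\ga_1,v\}$ and the chart is non-singular, and symmetrically for $\ga_{n-1}+\ga_n$. Hence one Nash blowup already resolves $\xg$.

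For two compact edges, let $\ga_m$ be the central vertex and let $v_1,v_2$ be the primitive directions of the two edges, so that $\ga_k=\ga_1+(k-1)v_1$ for $k\le m$ and $\ga_k=\ga_m+(k-m)v_2$ for $k\ge m$. If $n=3$ then $\Ga$ is already a three-element set and Theorem \ref{Reb} applies directly, so assume $n\ge 4$. The reflection of $\Z^2$ interchanging the two rays of $\sd$ lets me treat only one end vertex and one central vertex. At a genuine end vertex, say $\ga_1+\ga_2$ with $m\ge 3$, the pair $\{\ga_1,v_1\}$ is a $\Z$-basis and, using strict convexity of $\sd$ together with the counterclockwise ordering, $v_2$ has non-negative coordinates in this basis; consequently every element of $\Ga(\ga_1,\ga_2)$ lies in $\N\{\ga_1,v_1\}$ and the chart is non-singular, as in the one-edge case. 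The remaining charts sit at the central vertex, namely $\ga_{m-1}+\ga_m$ and $\ga_m+\ga_{m+1}$, and these carry the singularities.

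The heart of the argument, and the step I expect to be the main obstacle, is the analysis of the two central charts. For $\ga_m+\ga_{m+1}$ I would pass to the basis of $\Z^2$ given by the two extreme rays of the cone generated by $\Ga(\ga_m,\ga_{m+1})$; in these coordinates the semigroup becomes explicit and its Hilbert basis can be read off. The determinant of the two extreme rays --- an explicit function of $\det(v_1,v_2)$ and of the numbers of lattice points on the two edges --- decides the outcome: when it equals $\pm1$ the chart is non-singular (case (i)); when the Hilbert basis has exactly three elements one is in case (ii) and invokes Theorem \ref{Reb}; and in the remaining situation, which occurs when the opposite edge is short, the Hilbert basis has four elements, and I would identify $\hat\Ga=\{\ga_{m-1},\ga_m,\ga_{m+1}\}$, checking that the extra generators contributed by the far lattice points of the long edge are redundant so that $\La=\hat\Ga(\ga_m,\ga_{m+1})$ (case (iii)). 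The delicate points are that this trichotomy is not governed by a single uniform inequality --- it genuinely depends on the arithmetic of $v_1,v_2$ and on both edge lengths, as the surface defined by $\{(1,0),(1,1),(1,2),(3,7)\}$ already illustrates --- and that in case (iii) one must verify both the redundancy of the surplus generators and that $\hat\Ga$ satisfies the hypotheses $\Z\hat\Ga=\Z^2$ and strict convexity required by Theorem \ref{Reb}.
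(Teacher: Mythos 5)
Your overall strategy coincides with the paper's: apply the combinatorial algorithm once and show that every chart of the Nash blowup is either non-singular, or defined by a three-element generating set (so Theorem \ref{Reb} applies), or equal to $\hat{\Ga}(\ga_i,\ga_j)$ for a three-element $\hat{\Ga}$, hence a chart of the Nash blowup of $X_{\hat{\Ga}}$ to which Theorem \ref{Reb} again applies. Your treatment of the one-edge case and of the end charts (including the far end via the ray-swapping symmetry) is correct and consistent with Propositions \ref{one segment}, \ref{two segments-1} and \ref{two segments-2}.

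The gap is that the analysis of the two charts at the central vertex --- which you yourself call ``the heart of the argument'' --- is never carried out: what you give is a list of things that would have to be verified, not a verification. The paper establishes the trichotomy there by explicit computation in normalized coordinates (first edge $[(1,0),(1,k)]$, second edge of direction $(a,b)$), split according to whether one of the edges has exactly two lattice points, and the decisive arithmetic input is the unimodularity of consecutive generators of the saturated semigroup, i.e.\ $b-ka=1$; without isolating this fact one cannot, for instance, reduce the five-element generating set of $\N\Ga(\ga_{k+1},\ga_{k+2})$ to $\{(1,k),(a,b),(-a,-1-b)\}$, nor check in case (iii) that the surplus generators coming from the far lattice points really are redundant so that $\La=\hat{\Ga}(\ga_i,\ga_j)$ on the nose. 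Moreover, your proposed criterion --- ``the determinant of the two extreme rays decides the outcome'' --- does not match what actually happens: at the chart $\ga_{k+1}+\ga_{k+2}$ with both edges long, the extreme rays of the chart's cone have determinant $a$, which can be arbitrarily large, yet the chart always lands in case (ii); the relevant dichotomy is whether the non-central point of the vertex is an endpoint of $\Theta$ (equivalently, whether \emph{its} edge has exactly two lattice points), and your heuristic ``case (iii) occurs when the opposite edge is short'' is stated the wrong way round. So the skeleton is right, but the trichotomy that constitutes the theorem is asserted rather than proved.
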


The proof is divided into several cases which we discuss in Propositions \ref{one segment}, \ref{two segments-1}, and \ref{two segments-2}. In each statement we use the notation of Theorem \ref{main}. The cases are:

\begin{enumerate}
\item $\Theta$ has one compact edge.
\item $\Theta$ has two compact edges, one of them having only two integral points.
\item $\Theta$ has two compact edges, both having more than two integral points.
\end{enumerate}

\begin{prop}\label{one segment}
Assume that $\Theta$ has only one compact edge. Then the Nash blowup of $\xg$ is non-singular.
\end{prop}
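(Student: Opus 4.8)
The plan is to apply Proposition~\ref{vertices} to cut the Nash blowup down to just two affine charts and then to verify that each chart is isomorphic to $\C^2$. Since $\Theta$ has a single compact edge, all of $\ga_1,\dots,\ga_n$ are collinear and lie on that edge, so consecutive minimal generators differ by a fixed primitive vector $v:=\ga_2-\ga_1$; that is, $\ga_i=\ga_1+(i-1)v$ for every $i$. In the language of Proposition~\ref{vertices} the vertices of $\Theta$ are exactly $\{\ga_1,\ga_n\}$, so $r=2$ and there is no index $j$ with $1<j<r$. Consequently $\vn=\{\ga_1+\ga_2,\ga_{n-1}+\ga_n\}$, and the Nash blowup is covered by the two charts $X_{\Ga(\ga_1,\ga_2)}$ and $X_{\Ga(\ga_{n-1},\ga_n)}$.

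Next I would compute the two semigroups explicitly. For the vertex $\ga_1+\ga_2$, using the simplified description of $A(\ga_1),A(\ga_2)$ recorded in the Remark and substituting $\ga_i=\ga_1+(i-1)v$, one gets $A(\ga_1)=\{2v,3v,\dots,(n-1)v\}$ and $A(\ga_2)=\{v,2v,\dots,(n-2)v\}$, whence
\[
\Ga(\ga_1,\ga_2)=\{\ga_1,\ga_1+v\}\cup\{v,2v,\dots,(n-1)v\}.
\]
Since both $\ga_1$ and $v$ lie in this set while every other element is a non-negative integer combination of them, the minimal generating set of $\N\Ga(\ga_1,\ga_2)$ is $\{\ga_1,v\}$. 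An entirely symmetric computation at $\ga_{n-1}+\ga_n$ gives minimal generating set $\{\ga_n,-v\}=\{\ga_n,\ga_{n-1}-\ga_n\}$ for the second chart. These substitutions are purely formal, so I would keep them brief.

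The one point that is not merely formal is checking that each of these two-element sets is a $\Z$-basis of $\Z^2$. Because $\Ga$ is the minimal generating set of the saturated semigroup $\sd\cap\Z^2$, we have $\Z\Ga=\Z^2$; and since every $\ga_i$ lies in $\ga_1+\Z v$, this group is already generated by $\ga_1$ and $v$. Hence $\{\ga_1,v\}$ is a basis of $\Z^2$, i.e.\ $\det(\ga_1\ v)=\pm1$, and the same holds for $\{\ga_n,-v\}$. Each chart is therefore defined by a lattice basis, so $X_{\Ga(\ga_1,\ga_2)}\cong X_{\Ga(\ga_{n-1},\ga_n)}\cong\C^2$ is non-singular, and since the two charts cover $\nxg$, the Nash blowup is non-singular. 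The degenerate case $n=2$, a single edge carrying only its two endpoints, is immediate: there $\Ga$ is itself a $\Z$-basis and $\xg$ is already smooth. I expect the basis verification to be the only substantive step, with everything else following by direct substitution into Proposition~\ref{vertices} and the formulas for $A(\ga_i)$.
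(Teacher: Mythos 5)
Your proof is correct and follows essentially the same route as the paper: Proposition~\ref{vertices} reduces to the two charts at $\ga_1+\ga_2$ and $\ga_{n-1}+\ga_n$, and each chart's semigroup is shown to be generated by a lattice basis. The only difference is presentational --- the paper first normalizes coordinates so that the edge is $[(1,0),(1,k)]$ and the basis $\{(1,0),(0,1)\}$ is visible by inspection, whereas you keep a general primitive direction $v$ and deduce $\det(\ga_1\ v)=\pm1$ from $\Z\Ga=\Z^2$; both are valid.
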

\begin{proof}
Up to a change of coordinates, we can assume that the compact edge of $\Theta$ is the segment $[(1,0),(1,k)]$, for some $k\geq2$. Hence its vertices are $(1,0)$ and $(1,k)$. Moreover, $\Ga=\{\ga_1=(1,0),\ga_2=(1,1),\ldots,\ga_{k}=(1,k-1),\ga_{k+1}=(1,k)\}$. Then, Proposition \ref{vertices} implies
$$\vn=\{\ga_1+\ga_2,\ga_{k}+\ga_{k+1}\}.$$
\begin{itemize}
\item Consider $\ga_1+\ga_2\in V(\Nw)$. Then 
\end{itemize}
\begin{align}
A(\ga_1)&=\{(0,2),\ldots,(0,k)\},\notag\\
A(\ga_2)&=\{(0,1),\ldots,(0,k-1)\}.\notag 
\end{align}
Hence, $\Ga(\ga_1,\ga_2)=\{(1,0),(1,1)\}\cup\{(0,1),(0,2),\ldots,(0,k)\}$. The minimal generating set of $\N\Ga(\ga_1,\ga_2)$ is $\{(1,0),(0,1)\}$. Thus $X_{\Ga(\ga_1,\ga_2)}$ is non-singular.
\begin{itemize}
\item Consider $\ga_{k}+\ga_{k+1}\in\vn$. Then
\end{itemize}
\begin{align}
A(\ga_{k})&=\{(0,-(k-1)),\ldots,(0,-1)\},\notag\\
A(\ga_{k+1})&=\{(0,-k),\ldots,(0,-2)\}.\notag 
\end{align}
Hence, $\Ga(\ga_k,\ga_{k+1})=\{(1,k-1),(1,k)\}\cup\{(0,-1),(0,-2),\ldots,(0,-k)\}$. The minimal generating set of $\N\Ga(\ga_k,\ga_{k+1})$ is $\{(1,k),(0,-1)\}$. Thus $X_{\Ga(\ga_k,\ga_{k+1})}$ is non-singular.
\end{proof}

\begin{prop}\label{two segments-1}
Assume that $\Theta$ has two compact edges, one of them having only two integral points. Then iterating the Nash blowup resolves the singularities of $\xg$.
\end{prop}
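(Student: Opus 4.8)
The plan is to adapt the strategy already illustrated in Proposition~\ref{one segment} and the worked example: apply the combinatorial algorithm once at each vertex of $\Nw$ and show that each resulting chart is either non-singular, defined by a three-element semigroup (so that Theorem~\ref{Reb} applies), or equal to a chart of the Nash blowup of a three-element semigroup (so that Theorem~\ref{Reb} applies after one more step). Up to a unimodular change of coordinates I would normalize the configuration so that one compact edge is short, i.e.\ the edge with only two integral points. Write the common vertex of the two edges as $\ga_p$. Along the long edge the generators are consecutive lattice points $\ga_1,\ldots,\ga_p$, and the short edge contributes a single further generator $\ga_{p+1}$ with no interior lattice point. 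By Proposition~\ref{vertices}, $\vn=\{\ga_1+\ga_2,\ \ga_{p-1}+\ga_p,\ \ga_p+\ga_{p+1}\}$ (with the first two coinciding when the long edge also has only two points, a degenerate subcase to record separately).

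First I would treat the vertex $\ga_1+\ga_2$ lying in the interior of the long edge. Exactly as in Proposition~\ref{one segment}, the long edge behaves like the one-segment case: since the points on that edge are collinear and consecutive, the differences $\ga_k-\ga_1$ and $\ga_k-\ga_2$ for $\ga_k$ on the same edge are integer multiples of the primitive edge direction, so they contribute a single primitive generator, and the chart $X_{\Ga(\ga_1,\ga_2)}$ should come out non-singular (its minimal generating set is a basis of $\Z^2$). The only new contributions come from the off-edge generator $\ga_{p+1}$, and I would check that after subtracting $\ga_1$ (resp.\ $\ga_2$) these do not spoil smoothness — this is the point where having only two integral points on the short edge is used, since it keeps the number of new vectors small. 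The symmetric vertex $\ga_{p-1}+\ga_p$ is handled the same way.

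The substantive case is the vertex $\ga_p+\ga_{p+1}$ sitting at the junction of the two edges. Here both $A(\ga_p)$ and $A(\ga_{p+1})$ pick up genuine contributions from both edges, so the chart need not be smooth. My plan is to compute $\Ga(\ga_p,\ga_{p+1})$ explicitly and identify its minimal generating set; I expect it to have three elements, or to coincide with $\hat\Ga(\ga_p,\ga_{p+1})$ for the reduced three-element set $\hat\Ga=\{\ga_{p-1},\ga_p,\ga_{p+1}\}$ (mirroring the last bullet of the worked example, where passing to a three-element subset reproduces the same chart). In either case Theorem~\ref{Reb} finishes the argument — directly in the former case, and after recognizing the chart as a Nash-blowup chart of a smooth-resolvable three-generator surface in the latter. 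Verifying $\Z\Ga(\ga_p,\ga_{p+1})=\Z^2$ and strict convexity of the relevant cone, the hypotheses of Theorem~\ref{Reb}, is a routine but necessary check.

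The main obstacle will be the bookkeeping at the junction vertex $\ga_p+\ga_{p+1}$: controlling the minimal generating set of $\N\Ga(\ga_p,\ga_{p+1})$, and in particular proving that it never requires more than three generators beyond what Theorem~\ref{Reb} can absorb. The collinearity of the long-edge generators is the structural fact that collapses their many differences into one primitive vector, and establishing precisely that this collapse occurs — together with pinning down how the single short-edge generator interacts with it — is where the care is needed; the remaining smooth charts are then essentially a repetition of Proposition~\ref{one segment}.
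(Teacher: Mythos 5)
Your overall strategy is the same as the paper's: compute the chart at each vertex of $\Nw$ and show it is either non-singular, defined by a three-element semigroup, or equal to a chart of the Nash blowup of a three-element semigroup, then invoke Theorem \ref{Reb}. Your reduction by a reflection in $GL_2(\Z)$ to the case where the two-point edge comes last is legitimate and would in fact merge into one the two sub-cases that the paper treats separately (short edge second versus short edge first).

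The genuine error is the claim that the vertex $\ga_{p-1}+\ga_p$ is ``handled the same way'' as $\ga_1+\ga_2$ and yields a non-singular chart. These two vertices are not symmetric: $\ga_1$ is an endpoint of the chain of compact edges, whereas $\ga_p$ is the junction of the two edges. At $\ga_1+\ga_2$ all difference vectors lie in the unimodular cone spanned by $\ga_1$ and the primitive edge direction, which is why that chart is smooth; but at $\ga_{p-1}+\ga_p$ the long edge contributes negative multiples of the edge direction while $\ga_{p+1}$ contributes a vector pointing across the corner. In the normalization where the long edge is $[(1,0),(1,k)]$ and $\ga_{p+1}=(1+a,k+b)$, the resulting semigroup is generated by $\{(0,-1),(1,k),(a,b+1)\}$, whose extremal rays $(0,-1)$ and $(a,b+1)$ have determinant $a$; for $a\geq 2$ this chart is singular. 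The paper's own example $\Ga=\{(1,0),(1,1),(1,2),(3,7)\}$ satisfies the hypotheses of this proposition, and at $\ga_2+\ga_3$ the chart is generated by $\{(0,-1),(1,2),(2,6)\}$, which is singular. The slip is recoverable within your own framework, since the chart is three-generated and Theorem \ref{Reb} applies to it --- which is exactly what the paper does --- but the stated reason (smoothness by symmetry) is false and must be replaced by an explicit identification of the three generators. Your treatment of the junction vertex $\ga_p+\ga_{p+1}$ does match the paper: there $\Ga(\ga_p,\ga_{p+1})$ reduces to a four-element set equal to $\hat\Ga(\ga_p,\ga_{p+1})$ for $\hat\Ga=\{\ga_{p-1},\ga_p,\ga_{p+1}\}$, and Theorem \ref{Reb} applied to $X_{\hat\Ga}$ finishes that chart.
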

\begin{proof}
Up to a change of coordinates, we can assume that the compact edges of $\Theta$ are the segments $[(1,0),(1,k)]$, for some $k\geq1$, and $[(1,k),(p,q)]$, where $1<p<\frac{q}{k}$ and $(p,q)=1$. Hence the vertices of $\Theta$ are $(1,0)$, $(1,k)$, and $(p,q)$. Recall that 
$$\Ga=\Big([(1,0),(1,k)]\cup[(1,k),(p,q)]\Big)\cap\Z^2.$$
By Theorem \ref{Reb}, we assume that $|\Ga|\geq4$.
\\
\\
\noindent \textbf{Case 1:} Suppose that $[(1,k),(p,q)]\cap\Z^2=\{(1,k),(p,q)\}$. Write $(p,q)=(1,k)+(a,b)$, where $(a,b)\in\N^2$ and $a,b\geq1$. Since $|\Ga|\geq4$ we have, for some $k\geq2$,
$$\Ga=\{\ga_1=(1,0),\ga_2=(1,1),\ldots,\ga_{k+1}=(1,k),\ga_{k+2}=(1+a,k+b)\}.$$
By Proposition \ref{vertices}, 
$$\vn=\{\ga_1+\ga_2,\ga_{k}+\ga_{k+1},\ga_{k+1}+\ga_{k+2}\}.$$
\begin{itemize}
\item Consider $\ga_1+\ga_2\in V(\Nw)$. Then 
\end{itemize}
\begin{align}
A(\ga_1)&=\{(0,2),\ldots,(0,k),(a,k+b)\},\notag\\
A(\ga_2)&=\{(0,1),\ldots,(0,k-1),(a,k+b-1)\}.\notag 
\end{align}
Hence, $\Ga(\ga_1,\ga_2)=\{(1,0),(1,1)\}\cup\{(0,1),(0,2),\ldots,(0,k),(a,k+b-1),(a,k+b)\}$. The minimal generating set of $\N\Ga(\ga_1,\ga_2)$ is $\{(1,0),(0,1)\}$. Thus $X_{\Ga(\ga_1,\ga_2)}$ is non-singular.
\begin{itemize}
\item Consider $\ga_{k}+\ga_{k+1}\in\vn$. Then
\end{itemize}
\begin{align}
A(\ga_{k})&=\{(0,-(k-1)),\ldots,(0,-1),(a,b+1)\},\notag\\
A(\ga_{k+1})&=\{(0,-k),\ldots,(0,-2),(a,b)\}.\notag 
\end{align}
Hence, $\Ga(\ga_k,\ga_{k+1})$ is the set
$$\{(1,k-1),(1,k)\}\cup\{(0,-1),(0,-2),\ldots,(0,-k),(a,b),(a,b+1)\}.$$ 
A generating set of $\N\Ga(\ga_k,\ga_{k+1})$ is $\{(0,-1),(1,k),(a,b+1)\}$. By Theorem \ref{Reb}, iterating Nash blowup resolves the singularities of $X_{\Ga(\ga_k,\ga_{k+1})}$.
\begin{itemize}
\item Consider $\ga_{k+1}+\ga_{k+2}\in \vn$. Then 
\end{itemize}
\begin{align}
A(\ga_{k+1})&=\{(0,-k),\ldots,(0,-2),(0,-1)\},\notag\\
A(\ga_{k+2})&=\{\ga_1-\ga_{k+2},\ga_2-\ga_{k+2},\ldots,\ga_{k}-\ga_{k+2}\}\notag\\
&=\{(-a,-k-b),(-a,-k+1-b),\ldots,(-a,-1-b)\}.\notag 
\end{align}
A generating set of $\N\Ga(\ga_{k+1},\ga_{k+2})$ is 
$$\{(0,-1),(-a,-1-b),(1,k),(1+a,k+b)\}.$$
Now consider $\hat{\Ga}=\{\ga_{k},\ga_{k+1},\ga_{k+2}\}$. Then 
$$\hat{\Ga}(\ga_{k+1},\ga_{k+2})=\{(0,-1),(-a,-1-b),(1,k),(1+a,k+b)\}.$$
In other words, $X_{\hat{\Ga}(\ga_{k+1},\ga_{k+2})}= X_{\Ga(\ga_{k+1},\ga_{k+2})}$ is an affine chart of the Nash blowup of $X_{\hat{\Ga}}$. By Theorem \ref{Reb}, iterating Nash blowup resolves the singularities of $X_{\hat{\Ga}}$. Hence, the same holds for $X_{\Ga(\ga_{k+1},\ga_{k+2})}$.
\\
\\
\noindent \textbf{Case 2:} Suppose that $[(1,0),(1,k)]\cap\Z^2=\{(1,0),(1,k)\}$. Hence, we must have $k=1$. Since $|\Ga|\geq4$ we have that, for some $m\geq2$, 
$$[(1,1),(p,q)]\cap\Z^2=\{(1,1),(1+a,1+b),\ldots,(1+ma,1+mb)=(p,q)\},$$
where $(a,b)\in\N^2$ and $a,b\geq1$. Hence,
\begin{align}
\Ga=&\{\ga_1=(1,0),\ga_2=(1,1)\}\notag\\
&\cup\{\ga_3=(1+a,1+b),\ldots,\ga_{m+2}=(1+ma,1+mb)\}.\notag
\end{align}
By Proposition \ref{vertices}, $\vn=\{\ga_1+\ga_2,\ga_{2}+\ga_{3},\ga_{m+1}+\ga_{m+2}\}.$
\begin{itemize}
\item Consider $\ga_1+\ga_2\in V(\Nw)$. Then 
\end{itemize}
\begin{align}
A(\ga_1)&=\{(a,b+1),\ldots,(ma,1+mb)\},\notag\\
A(\ga_2)&=\{(a,b),\ldots,(ma,mb)\}.\notag 
\end{align}
Hence, 
\begin{align}
\Ga(\ga_1,\ga_2)=&\{(1,0),(1,1)\}\notag\\
&\cup\{(a,b),\ldots,(ma,mb)\}\notag\\
&\cup\{(a,1+b),\ldots,(ma,1+mb)\}.\notag 
\end{align}
A generating set of $\N\Ga(\ga_1,\ga_2)$ is $\{(1,0),(1,1),(a,b),(a,1+b)\}$. Now consider $\hat{\Ga}=\{\ga_1,\ga_2,\ga_3\}$. Then 
$$\hat{\Ga}(\ga_1,\ga_2)=\{(1,0),(1,1),(a,b),(a,1+b)\}.$$
Hence, $X_{\hat{\Ga}(\ga_{1},\ga_{2})}=X_{\Ga(\ga_{1},\ga_{2})}$ is an affine chart of the Nash blowup of $X_{\hat{\Ga}}$. By Theorem \ref{Reb}, iterating Nash blowup resolves the singularities of $X_{\hat{\Ga}}$. In particular, the same holds for $X_{\Ga(\ga_{1},\ga_{2})}$.
\begin{itemize}
\item Consider $\ga_2+\ga_3\in V(\Nw)$. Then 
\end{itemize}
\begin{align}
A(\ga_2)&=\{(0,-1),(2a,2b),\ldots,(ma,mb)\},\notag\\
A(\ga_3)&=\{(-a,-1-b),(a,b),\ldots,((m-1)a,(m-1)b)\}.\notag 
\end{align}
Hence, 
\begin{align}
\Ga(\ga_2,\ga_3)=&\{(1,1),(1+a,1+b)\}\notag\\
&\cup\{(0,-1),(-a,-1-b)\}\notag\\
&\cup\{(a,b),\ldots,(ma,mb)\}.\notag 
\end{align}
A generating set of $\N\Ga(\ga_2,\ga_3)$ is $\{(1,1),(a,b),(-a,-1-b)\}$. By Theorem \ref{Reb}, iterating Nash blowup resolves the singularities of $X_{\Ga(\ga_2,\ga_3)}$.
\begin{itemize}
\item Consider $\ga_{m+1}+\ga_{m+2}\in V(\Nw)$. Then 
\end{itemize}
\begin{align}
A(\ga_{m+1})&=\{(-(m-1)a,-1-(m-1)b),(-(m-1)a,-(m-1)b),\ldots,(-a,-b)\},\notag\\
A(\ga_{m+2})&=\{(-ma,-1-mb),(-ma,-mb),\ldots,(-2a,-2b)\}.\notag 
\end{align}
Hence, 
\begin{align}
\Ga(\ga_{m+1},\ga_{m+2})=&\{(1+(m-1)a,1+(m-1)b),(1+ma,1+mb)\}\notag\\
&\cup\{(-(m-1)a,-1-(m-1)b),(-ma,-1-mb)\}\notag\\
&\cup\{(-a,-b),\ldots,(-ma,-mb)\}.\notag 
\end{align}
A generating set of $\N\Ga(\ga_{m+1},\ga_{m+2})$ is 
$$\{(1+ma,1+mb),(-a,-b),(-(m-1)a,-1-(m-1)b)\}.$$
By Theorem \ref{Reb}, iterating Nash blowup resolves the singularities of the toric variety $X_{\Ga(\ga_{m+1},\ga_{m+2})}$.
\end{proof}

\begin{prop}\label{two segments-2}
Assume that $\Theta$ has two compact edges, both of them having more than two integral points. Then iterating the Nash blowup resolves the singularities of $\xg$.
\end{prop}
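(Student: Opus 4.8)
The plan is to mirror the structure of Proposition \ref{two segments-1}, but now both compact edges carry at least three lattice points, so the generating set $\Ga$ has vertices at the two endpoints of the first edge and the far endpoint of the second. After a unimodular change of coordinates I would normalize the first edge to lie on the segment $[(1,0),(1,k)]$ with $k\geq 2$, so that the intermediate lattice points on this edge are $\ga_1=(1,0),\ga_2=(1,1),\ldots,\ga_{k+1}=(1,k)$. The second edge runs from the vertex $(1,k)$ to the vertex $(p,q)$; writing the primitive direction of this edge as $(a,b)$ with $a,b\geq 1$ and $\gcd(a,b)=1$, its lattice points are $\ga_{k+1}=(1,k),\ga_{k+2}=(1+a,k+b),\ldots,\ga_{k+1+m}=(1+ma,k+mb)=(p,q)$ for some $m\geq 2$. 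By Proposition \ref{vertices} the relevant vertices of $\Nw$ are
$$\vn=\{\ga_1+\ga_2,\ \ga_k+\ga_{k+1},\ \ga_{k+1}+\ga_{k+2},\ \ga_{k+m}+\ga_{k+1+m}\},$$
where the first edge contributes the two outer vertices $\ga_1+\ga_2$ and $\ga_k+\ga_{k+1}$, and the second edge contributes $\ga_{k+1}+\ga_{k+2}$ and $\ga_{k+m}+\ga_{k+1+m}$.

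Next I would compute, for each vertex in $\vn$, the sets $A(\ga_i),A(\ga_j)$ using the simplified form from the Remark, assemble $\Ga(\ga_i,\ga_j)$, and reduce to a minimal generating set. I expect the outermost vertex $\ga_1+\ga_2$ to again yield the nonsingular chart generated by $\{(1,0),(0,1)\}$, exactly as in Proposition \ref{one segment}, because near the first vertex of the first edge the local picture is insensitive to what happens on the far edge. For the vertex $\ga_{k+m}+\ga_{k+1+m}$ at the opposite extreme I anticipate a symmetric computation producing a three-generator semigroup, which $\ga_{k+m},\ga_{k+1+m}$ together with one difference vector span; Theorem \ref{Reb} then finishes that chart. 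For the two interior vertices $\ga_k+\ga_{k+1}$ and $\ga_{k+1}+\ga_{k+2}$ straddling the shared vertex $(1,k)$, the calculation should parallel the corresponding steps in Proposition \ref{two segments-1}: I expect a reduction to a three-generator semigroup, invoking the auxiliary-triple trick $\hat\Ga=\{\ga_{k+1},\ga_{k+2},\ga_{k+3}\}$ (or $\{\ga_{k},\ga_{k+1},\ga_{k+2}\}$) so that the chart appears as a single Nash blowup chart of a three-generated surface, whence Theorem \ref{Reb} applies.

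The main obstacle I foresee is the vertex $\ga_{k+1}+\ga_{k+2}$ sitting at the junction of the two edges: here $A(\ga_{k+1})$ collects differences $\ga_i-\ga_{k+1}$ for the points on the first edge (giving vectors $(0,-1),\ldots,(0,-k)$ together with $(a,b)$-type vectors along the second edge), while $A(\ga_{k+2})$ mixes contributions from both edges, so the resulting $\Ga(\ga_{k+1},\ga_{k+2})$ is the richest set and its minimal generating set is the least obvious to pin down. The crux is to verify that after reduction this semigroup still has at most three minimal generators, or else realizes itself as one chart of a Nash blowup of a three-generated semigroup via the truncation $\hat\Ga$; establishing that the "extra" lattice points on each edge are redundant in the semigroup (dominated by sums of the chosen generators) is where the combinatorial bookkeeping is genuinely needed. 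Once that reduction is secured, every chart is either nonsingular or covered by Theorem \ref{Reb}, and iterating Nash blowups resolves $\xg$.
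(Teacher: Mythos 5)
Your setup coincides with the paper's: the same normal form, the same four vertices of $\Nw$, and the same case-by-case analysis of the charts $\Ga(\ga_i,\ga_j)$. The chart $\ga_1+\ga_2$ is indeed nonsingular exactly as you predict. But the proposal stops short of the actual content: for the three remaining charts you only \emph{anticipate} that the semigroups reduce to at most three generators (or to a chart of a three-generated surface via a truncated $\hat\Ga$), and you explicitly defer the ``combinatorial bookkeeping'' that would establish this. That bookkeeping is the proof; without it the argument is a plausible outline, not a demonstration. Concretely, the ingredient you never identify is the unimodularity of consecutive minimal generators of a saturated planar semigroup, i.e.\ $\det(\ga_{k+m}\,\ga_{k+m+1})=b-ka=1$. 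This relation is what the paper uses at the far chart $\ga_{k+m}+\ga_{k+m+1}$ to show that \emph{all} the difference vectors $(-(m-1)a,-l-(m-1)b)$ lie in the semigroup generated by just $\{(1+ma,k+mb),(-a,-b)\}$, so that chart is in fact \emph{nonsingular} --- not three-generated with an appeal to Theorem \ref{Reb}, as you predict. Your expectation for that chart is therefore wrong, and if you tried to carry out your plan there you would find a generating set with more than three elements and no obvious way to invoke Rebassoo without first proving $b-ka=1$.

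For the two interior charts your instincts are closer but still imprecise. At $\ga_k+\ga_{k+1}$ the semigroup reduces directly to the three generators $\{(1,k),(0,-1),(a,1+b)\}$, and at the junction chart $\ga_{k+1}+\ga_{k+2}$ it reduces to $\{(1,k),(a,b),(-a,-1-b)\}$ via the identities $(1+a,k+b)=(1,k)+(a,b)$ and $(0,-1)=(a,b)+(-a,-1-b)$; in neither case is the auxiliary-triple trick $\hat\Ga$ needed (that device is used in Proposition \ref{two segments-1}, not here). So the route is right, but the proposal is missing both the reductions themselves and the one nontrivial arithmetic fact ($b-ka=1$) that makes the final chart close up. To complete the proof you would need to supply the explicit computations of $A(\ga_i)$, $A(\ga_j)$ for each vertex, exhibit the reductions above, and justify the determinant identity (the paper derives it from the continued-fraction description in Section \ref{sect cont frac}).
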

\begin{proof}
Up to a change of coordinates, we can assume that the compact edges of $\Theta$ are the segments $[(1,0),(1,k)]$, for some $k\geq2$, and $[(1,k),(p,q)]$, where $1<p<\frac{q}{k}$ and $(p,q)=1$. Hence the vertices of $\Theta$ are $(1,0)$, $(1,k)$, and $(p,q)$. Then, for some $m\geq2$ and some $(a,b)\in\N^2$, where $a,b\geq1$, $(a,b)=1$, we have 
\begin{align}
\Ga=&\big([(1,0),(1,k)]\cup[(1,k),(p,q)]\big)\cap\Z^2\notag\\
=&\{\ga_1=(1,0),\ldots,\ga_{k+1}=(1,k)\}\notag\\
&\cup\{\ga_{k+2}=(1+a,k+b),\ldots,\ga_{k+m+1}=(1+ma,k+mb)=(p,q)\}.\notag
\end{align}
We will show later in the paper that $b-ka=1$. By Proposition \ref{vertices}, 
$$\vn=\{\ga_1+\ga_2,\ga_{k}+\ga_{k+1},\ga_{k+1}+\ga_{k+2},\ga_{k+m}+\ga_{k+m+1}\}.$$
\begin{itemize}
\item Consider $\ga_1+\ga_2\in V(\Nw)$. 
\end{itemize}
Exactly as in the first item of the proof of Proposition \ref{one segment}, we obtain that the minimal generating set of $\N\Ga(\ga_1,\ga_2)$ is $\{(1,0),(0,1)\}$. Thus $X_{\Ga(\ga_1,\ga_2)}$ is non-singular.
\begin{itemize}
\item Consider $\ga_k+\ga_{k+1}\in V(\Nw)$. Then
\end{itemize}
\begin{align}
A(\ga_{k})&=\{(0,-k+1),\ldots,(0,-1),(a,1+b),\ldots,(ma,1+mb)\},\notag\\
A(\ga_{k+1})&=\{(0,-k),\ldots,(0,-2),(a,b),\ldots,(ma,mb)\}.\notag 
\end{align}
Hence, 
\begin{align}
\Ga(\ga_k,\ga_{k+1})=&\{(1,k-1),(1,k)\}\notag\\
&\cup\{(0,-1),\ldots,(0,-k)\}\notag\\
&\cup\{(a,b),\ldots,(ma,mb)\},\notag\\
&\cup\{(a,1+b),\ldots,(ma,1+mb)\}.\notag
\end{align}
A generating set of $\N\Ga(\ga_k,\ga_{k+1})$ is $\{(1,k),(0,-1),(a,1+b)\}$. By Theorem \ref{Reb}, iterating Nash blowup resolves the singularities of $X_{\Ga(\ga_k,\ga_{k+1})}$.
\begin{itemize}
\item Consider $\ga_{k+1}+\ga_{k+2}\in V(\Nw)$. Then
\end{itemize}
\begin{align}
A(\ga_{k+1})&=\{(0,-k),\ldots,(0,-1),(2a,2b),\ldots,(ma,mb)\},\notag\\
A(\ga_{k+2})&=\{(-a,-k-b),\ldots,(-a,-1-b),(a,b),\ldots,((m-1)a,(m-1)b)\}.\notag 
\end{align}
Hence, 
\begin{align}
\Ga(\ga_{k+1},\ga_{k+2})=&\{(1,k),(1+a,k+b)\}\notag\\
&\cup\{(0,-1),\ldots,(0,-k)\}\notag\\
&\cup\{(a,b),\ldots,(ma,mb)\},\notag\\
&\cup\{(-a,-1-b),\ldots,(-a,-k-b)\}.\notag
\end{align}
First notice that a generating set of $\N\Ga(\ga_{k+1},\ga_{k+2})$ is
$$\{(1,k),(1+a,k+b),(0,-1),(a,b),(-a,-1-b)\}.$$
Now notice that this generating set can be further reduced to the subset $\{(1,k),(a,b),(-a,-1-b)\}.$
By Theorem \ref{Reb}, iterating Nash blowup resolves the singularities of $X_{\Ga(\ga_{k+1},\ga_{k+2})}$.
\begin{itemize}
\item Consider $\ga_{k+m}+\ga_{k+m+1}\in V(\Nw)$. Then
\end{itemize}
\begin{align}
A(\ga_{k+m})&=\{(-(m-1)a,-k-(m-1)b),\ldots,(-(m-1)a,-1-(m-1)b)\}\notag\\
&\cup \{(-(m-1)a,-(m-1)b),(-(m-2)a,-(m-2)b),\ldots,(-a,-b)\}\notag\\
A(\ga_{k+m+1})&=\{(-ma,-k-mb),\ldots,(-ma,-1-mb)\}\notag\\
&\cup \{(-ma,-mb),(-(m-1)a,-(m-1)b),\ldots,(-2a,-2b)\}.\notag
\end{align}
Hence, 
\begin{align}
\Ga(\ga_{k+m},&\ga_{k+m+1})=\notag\\
&\{(1+(m-1)a,k+(m-1)b),(1+ma,k+mb)\}\notag\\
&\cup\{(-a,-b),\ldots,(-ma,-mb)\}\notag\\
&\cup\{(-(m-1)a,-1-(m-1)b),\ldots,(-(m-1)a,-k-(m-1)b)\},\notag\\
&\cup\{(-ma,-1-mb),\ldots,(-ma,-k-mb)\}.\notag
\end{align}
First notice that a generating set of $\Ga(\ga_{k+m},\ga_{k+m+1})$ is
\begin{align}
&\{(1+ma,k+mb),(-a,-b)\}\notag\\
&\cup\{(-(m-1)a,-1-(m-1)b),\ldots,(-(m-1)a,-k-(m-1)b)\}.\notag
\end{align}
Notice that the vectors $(-(m-1)a,-l-(m-1)b)$, for $l\in\{1,\ldots,k\}$, are contained in the cone generated by $\{(1+ma,k+mb),(-a,-b)\}$. On the other hand, it is known (and follows from the results of Section \ref{sect cont frac}) that the determinant of two consecutive vectors in $\Ga$ is $1$. In particular, $\det(\ga_{k+m}\,\ga_{k+m+1})=b-ka=1$. A straightforward computation shows
$$\det((1+ma,k+mb)\,(-a,-b))=\det(\ga_{k+m}\,\ga_{k+m+1})=1.$$
This implies that $\N\Ga(\ga_{k+m},\ga_{k+m+1})$ is generated by the set $\{(1+ma,k+mb),(-a,-b)\}$, i.e., $X_{\Ga(\ga_{k+m},\ga_{k+m+1})}$ is non-singular.
\end{proof}

\begin{rem}
A careful analysis of the proof of Theorem \ref{main} shows that the exact same strategy can be applied to not necessarily saturated semigroups whose minimal generating sets are contained in one or two segments and satisfying some extra conditions. As an example, the same proof of Proposition \ref{one segment} applies to the non-saturated semigroup generated by $\Ga=\{(1,0),(1,3),(1,4),(1,6),(1,7),(1,9)\}$. However, it is not the case for $\Ga=\{(1,0),(1,1),(1,3),(1,4)\}$.

We do not pursue a proof of these other cases since it requires too many restrictions making the result only slightly more general. 
\end{rem}


\section{An example with an arbitrary number of segments.}

It is natural to ask whether the techniques presented in the previous sections extend to more compact edges on $\Theta$. In the following example with three compact edges, an iteration of Nash blowups resolves the singularities of the variety but because of different reasons from the ones we have used in our proofs. This shows that our techniques cannot be applied directly to more general cases.

\begin{exam}\label{fib 4}
Let $\Ga=\{\ga_1=(1,0),\ga_2=(1,1),\ga_3=(2,3),\ga_4=(5,8)\}$. We show that iterating twice the Nash blowup resolves the singularities of $\xg$. By Proposition \ref{vertices},
$$\vn=\{\ga_1+\ga_2,\ga_2+\ga_3,\ga_3+\ga_4\}.$$
Direct computations show:
\begin{align}
\N\Ga(\ga_1,\ga_2)&\mbox{ has as generating set }\{(1,0),(1,1),(1,2),(1,3)\},\notag\\
\N\Ga(\ga_2,\ga_3)&\mbox{ has as generating set }\{(-1,-3),(0,-1),(1,1),(2,3),(3,5),(4,7)\},\notag\\
\N\Ga(\ga_3,\ga_4)&\mbox{ has as generating set }\{(-4,-7),(-1,-2),(2,3),(5,8)\}.\notag
\end{align}
These generating sets correspond, respectively, to
\begin{align}
&[(1,0),(1,3)]\cap\Z^2,\notag\\
&[(-1,-3),(4,7)]\cap\Z^2,\notag\\
&[(-4,-7),(5,8)]\cap\Z^2.\notag
\end{align}
By Proposition \ref{one segment}, the Nash blowup of $X_{\Ga(\ga_1,\ga_2)}$, $X_{\Ga(\ga_2,\ga_3)}$, and $X_{\Ga(\ga_3,\ga_4)}$ is non-singular. Notice that the generating set of $\N(\ga_2,\ga_3)$ is neither of cardinality two or three and it cannot be obtained after applying once the algorithm to a semigroup generated by three elements.
\end{exam}

As an invitation to further investigate Nash blowups of normal toric surfaces, now we generalize what we did in the previous example for an arbitrary number of compact edges.

\begin{thm}\label{Fib}
Consider the Fibonacci sequence: $f_1:=1$, $f_2:=1$, and $f_j:=f_{j-1}+f_{j-2}$, for $j>2$. Let $l\geq 4$ be an even integer. Denote 
$$\Ga=\{(1,0),(f_1,f_2),(f_3,f_4),\ldots,(f_{l-1},f_{l})\}\subset\Z^2.$$
Then two iterations of Nash blowup resolve the singularities of $\xg$.
\end{thm}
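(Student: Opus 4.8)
The plan is to show that a single Nash blowup lands us in the one-compact-edge situation of Proposition \ref{one segment}, so that a second iteration completes the resolution. Extend the Fibonacci numbers to all of $\Z$ by $f_{-n}=(-1)^{n+1}f_n$, so that $\ga_k=(f_{2k-3},f_{2k-2})$ for every $1\le k\le n:=\tfrac{l}{2}+1$ (in particular $\ga_1=(f_{-1},f_0)=(1,0)$). The computational backbone is the linear recurrence $\ga_{j+1}=3\ga_j-\ga_{j-1}$, which holds coordinatewise because $f_{m+2}=3f_m-f_{m-2}$. Using the determinant identity $f_pf_{q+1}-f_{p+1}f_q=(-1)^qf_{p-q}$ one gets $\det(\ga_j\,\ga_{j+1})=1$ for all $j$, so every compact edge of $\Theta$ is primitive; and since the recurrence gives $\ga_j=\tfrac13(\ga_{j-1}+\ga_{j+1})$, each $\ga_j$ lies strictly below the chord $[\ga_{j-1},\ga_{j+1}]$ and is therefore a vertex of $\Theta$. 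Hence $\Ga$ is the minimal generating set of $\sd\cap\Z^2$, and Proposition \ref{vertices} applied with $r=n$ gives $\vn=\{\ga_i+\ga_{i+1}\mid 1\le i\le n-1\}$. Thus the first Nash blowup consists precisely of the charts $X_{\Ga(\ga_i,\ga_{i+1})}$.

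The core claim is that, for each $i$, the minimal generating set of $\N\Ga(\ga_i,\ga_{i+1})$ consists of consecutive lattice points on a single segment in the direction $d_i:=\ga_{i+1}-\ga_i=(f_{2i-2},f_{2i-1})$. I would introduce the functional $\ell_i(x,y)=f_{2i-1}x-f_{2i-2}y$, which vanishes on $d_i$, and use the identity above to prove $\ell_i(\ga_k)=f_{2(k-i)-1}$; in particular $\ell_i(\ga_i)=\ell_i(\ga_{i+1})=1$. Every element of $\Ga(\ga_i,\ga_{i+1})$ is then either $\ga_i,\ga_{i+1}$ or a translate $\ga_k-\ga_i$, $\ga_k-\ga_{i+1}$, and all of them have $\ell_i$-value at least $0$, with value exactly $1$ for $\ga_i$, $\ga_{i+1}$ and the four translates coming from $\ga_{i-1}$ and $\ga_{i+2}$. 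Writing $g_t:=\ga_i+t\,d_i$ for the lattice points of the line $\{\ell_i=1\}$, the relations $\ga_{i-1}=3\ga_i-\ga_{i+1}$ and $\ga_{i+2}=3\ga_{i+1}-\ga_i$ identify these six elements as $g_{-2},g_{-1},g_0,g_1,g_2,g_3$; since $d_i$ is primitive, they are exactly the lattice points of the segment $[g_{-2},g_3]$.

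It remains to prove that every other generator is redundant. In the unimodular coordinates $(s,t')$ attached to the basis $\{\ga_i,d_i\}$ one has $g_t=(1,t)$, while $\ga_k-\ga_i=(s_k-1,t'_k)$ and $\ga_k-\ga_{i+1}=(s_k-1,t'_k-1)$, where $s_k=f_{2(k-i)-1}$ and $t'_k=f_{2(k-i)}$. A lattice point $(S,T)$ with $S\ge1$ is a non-negative integer combination of $g_{-2},\dots,g_3$ as soon as $-2S\le T\le 3S$, because the available second coordinates $-2,\dots,3$ form a block of consecutive integers. Hence the redundancy of the remaining translates (those with $k\le i-2$ or $k\ge i+3$, where $s_k\ge5$) reduces to the elementary inequalities
\[
3-2f_{2\delta-1}\le f_{2\delta}\le 3f_{2\delta-1}-3,\qquad \delta=k-i,
\]
which I would verify directly from $f_{m+1}+f_{m-1}\ge3$ and $f_m+f_{m-2}\ge3$. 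Therefore the minimal generating set of $\N\Ga(\ga_i,\ga_{i+1})$ equals $\{g_{-2},\dots,g_3\}$, lying on the single segment $[g_{-2},g_3]$, and Proposition \ref{one segment} shows that the Nash blowup of $X_{\Ga(\ga_i,\ga_{i+1})}$ is non-singular. Consequently two iterations of the Nash blowup resolve $\xg$.

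The main obstacle is this middle step: pinning down, through the Fibonacci identities, the values $\ell_i(\ga_k)=f_{2(k-i)-1}$ and $t'_k=f_{2(k-i)}$ of all generators, and then deducing the redundancy of the far translates from the interval bound $-2S\le T\le 3S$. The two boundary charts $i=1$ and $i=n-1$, where $\ga_{i-1}$ respectively $\ga_{i+2}$ is missing, require the same bookkeeping but produce only the four consecutive points $g_0,\dots,g_3$ respectively $g_{-2},\dots,g_1$, still a single segment; the smallest case $l=4$ is subsumed in this boundary analysis.
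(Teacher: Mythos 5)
Your proposal is correct and follows the same overall architecture as the paper's proof: show that $\Ga$ is the minimal generating set with all $\ga_j$ as vertices of $\Theta$, list the charts of the first Nash blowup via Proposition \ref{vertices}, prove that each semigroup $\N\Ga(\ga_i,\ga_{i+1})$ is minimally generated by consecutive lattice points on a single segment of the line $\{\ell_i=1\}$ (your $g_{-2},\ldots,g_3$ are exactly the paper's $v_1,\ldots,v_6$ from Lemma \ref{l fib}, and your functional $\ell_i$ is the paper's $L$), and then invoke Proposition \ref{one segment}. The differences are in execution, and they are genuine improvements in uniformity: where the paper verifies $\det(v_j\,v_{j+1})=1$ and $L(v_j)=1$ by repeated applications of Vajda's identity and then proves redundancy of the remaining generators by a cone-containment argument relying on the convexity of $\Theta$, you pass to the unimodular basis $\{\ga_i,d_i\}$, compute the coordinates of every generator in closed form ($s_k=f_{2(k-i)-1}$, $t'_k=f_{2(k-i)}$, both correct by d'Ocagne's identity), and reduce redundancy to the elementary interval condition $-2S\le T\le 3S$, which works because the second coordinates of $g_{-2},\ldots,g_3$ form a consecutive block. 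Moreover, the paper treats the last chart by exhibiting an explicit symmetry $T\in GL_2(\Z)$ reversing $\Ga$, and the case $l=4$ by direct computation, whereas your boundary analysis handles the first and last charts (and $l=4$) uniformly as degenerate instances of the general one, producing four consecutive points instead of six. Both routes are sound; yours trades the identity-juggling for a single change of coordinates, at the cost of having to justify the extension of Fibonacci numbers to negative indices and the closed forms for $s_k,t'_k$, which you do correctly.
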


We need the following lemma for the proof of the theorem.

\begin{lem}\label{l fib}
With the notation of Theorem \ref{Fib}, assume that $l\geq6$. Fix an even number $i\in\{2,\ldots,l-4\}$. Let $f_{-1}=1, f_0=0$. Consider the following set of vectors:
\begin{align}
v_1&=(f_{i-3}-f_{i+1},f_{i-2}-f_{i+2}),\notag\\
v_2&=(f_{i-3}-f_{i-1},f_{i-2}-f_{i}),\notag\\
v_3&=(f_{i-1},f_{i}),\notag\\
v_4&=(f_{i+1},f_{i+2}),\notag\\
v_5&=(f_{i+3}-f_{i+1},f_{i+4}-f_{i+2}),\notag\\
v_6&=(f_{i+3}-f_{i-1},f_{i+4}-f_{i}).\notag
\end{align}
The vectors $v_1,\ldots,v_6$ satisfy the following properties:
\begin{enumerate}
\item $\det(v_j\,v_{j+1})=1$, for each $1\leq j \leq 5$. In particular, these vectors are ordered increasingly counterclockwise.
\item Let $L(x,y)=f_{i+1}x-f_iy$. Then $L(v_j)=1$ for all $1\leq j \leq 6$.
\item $[v_1,v_6]\cap\Z^2=\{v_1,\ldots,v_6\}$.
\end{enumerate}
\end{lem}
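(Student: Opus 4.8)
The plan is to exploit the fact that the six vectors all lie on a single affine line and are equally spaced along it; everything else then follows formally. The organizing observation is that consecutive vectors differ by the \emph{constant} vector $d:=(f_i,f_{i+1})$, so I would first establish this difference relation and afterwards read off all three properties from it together with two standard Fibonacci identities (the three-term recurrence and Cassini's identity $f_{n-1}f_{n+1}-f_n^2=(-1)^n$).

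First I would verify that $v_{j+1}-v_j=d$ for each $1\le j\le5$. Each of these five vector identities splits into two scalar identities, and each scalar identity reduces to the recurrence in a rearranged form, e.g.\ $f_{n+1}-f_{n-1}=f_n$, $\,2f_n-f_{n-2}=f_{n+1}$, and $f_{n+2}-2f_n=f_{n-1}$; so the verification is routine arithmetic with the recurrence. The one place requiring attention is the boundary index $i=2$, where $f_{i-3}=f_{-1}$ and $f_{i-2}=f_0$ enter: there I would check that the recurrence steps used remain valid under the stated convention $f_{-1}=1,\ f_0=0$ (for instance $f_1=f_0+f_{-1}$ and $f_0=f_2-f_1$), so that the difference computations hold at the extreme value of $i$ as well.

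Granting the difference relation, property (2) is immediate: since $L$ is linear and $L(d)=f_{i+1}f_i-f_if_{i+1}=0$, the value $L(v_j)$ does not depend on $j$, so it suffices to evaluate it once. Taking $v_3=(f_{i-1},f_i)$ gives $L(v_3)=f_{i+1}f_{i-1}-f_i^2=(-1)^i=1$ by Cassini's identity, using that $i$ is even. Property (1) then follows formally: writing $v_j=(x_j,y_j)$, one has $\det(v_j\ v_{j+1})=\det(v_j\ v_j+d)=\det(v_j\ d)$ by bilinearity, and expanding gives $\det(v_j\ d)=f_{i+1}x_j-f_iy_j=L(v_j)=1$. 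In particular each determinant is positive, which is exactly the asserted counterclockwise ordering.

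Finally, for property (3) I would note that $d=(f_i,f_{i+1})$ is a primitive lattice vector, since consecutive Fibonacci numbers are coprime, and hence $\{L=0\}\cap\Z^2=\Z d$. If $P,Q\in\Z^2$ both satisfy $L=1$, then $L(P-Q)=0$, so $P-Q\in\Z d$; thus the integer points of the line $\{L=1\}$ form the single coset $v_1+\Z d$. The difference relation gives $v_j=v_1+(j-1)d$, so the segment $[v_1,v_6]=[v_1,v_1+5d]$ meets $\Z^2$ in precisely the points $v_1+td$ for $t=0,\ldots,5$, that is, $\{v_1,\ldots,v_6\}$. I expect the only genuine difficulty to be the bookkeeping at the boundary index $i=2$; once the difference relation and Cassini's identity are in place, the three properties are corollaries of collinearity and primitivity.
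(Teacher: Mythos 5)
Your proof is correct, but it takes a genuinely different route from the paper. The paper proves items 1 and 2 by brute force: it invokes Vajda's identity $f_nf_{n+r+s}-f_{n+r}f_{n+s}=(-1)^{n+1}f_rf_s$ and expands each determinant $\det(v_j\,v_{j+1})$ and each value $L(v_j)$ separately into sums of such products (and, like you, it disposes of item 3 by noting it follows from items 1 and 2). Your organizing observation --- that $v_{j+1}-v_j$ equals the constant vector $d=(f_i,f_{i+1})$, so the six points form an arithmetic progression on the line $\{L=1\}$ --- does not appear in the paper, and it collapses the case analysis considerably: $L(v_j)$ is then independent of $j$ and needs only one evaluation via Cassini's identity ($f_{i-1}f_{i+1}-f_i^2=(-1)^i=1$ for $i$ even, which is the $r=s=1$ case of Vajda), and every determinant in item 1 reduces by bilinearity to $\det(v_j\;d)=L(v_j)=1$. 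Your treatment of item 3 via primitivity of $d$ is also more explicit than the paper's one-line dismissal, and your attention to the boundary index $i=2$ (checking that the convention $f_{-1}=1$, $f_0=0$ is compatible with the recurrence) addresses a point the paper handles by a separate ad hoc verification of $\det(v_1\,v_2)$ when $i=2$. What the paper's approach buys is uniformity --- Vajda's identity is the workhorse throughout Section 3 and the continued-fraction section --- while yours buys economy and makes the geometric structure (six equally spaced lattice points on a primitive-direction line at lattice distance one from the origin) transparent, which is exactly what Proposition \ref{one segment} is then applied to.
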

\begin{proof}
First notice that item 3 follows from 1 and 2. Now recall Vajda's identity, which is key to our arguments \cite{V}. For all $n,r,s\geq1$ the following holds
\begin{equation}\label{Vaj}
f_nf_{n+r+s}-f_{n+r}f_{n+s}=(-1)^{n+1}f_rf_s.   \tag{*}
\end{equation}
Let us prove item $1$ using (\ref{Vaj}). Recall that $i$ is an even number. If $i=2$ then $v_1=(1,0)$ and $v_2=(1,1)$ and $\det(v_1\,v_2)=1$. Assume $i\geq4$. Then
\begin{align*}
\det(v_1\,v_2)=&f_{i-3}f_{i-2}-f_{i-3}f_{i}-f_{i-2}f_{i+1}+f_if_{i+1}\\
&-\big(f_{i-3}f_{i-2}-f_{i-3}f_{i+2}-f_{i-2}f_{i-1}+f_{i-1}f_{i+2}\big)\\
=&-(f_{i-3}f_{i}-f_{i-2}f_{i-1})+(f_{i-3}f_{i+2}-f_{i-2}f_{i+1})-(f_{i-1}f_{i+2}-f_{i}f_{i+1})\\
=&(-1)^{i-1}f_1f_2+(-1)^{i-2}f_1f_4+(-1)^{i+1}f_1f_2\\
=&-1+3-1=1.
\end{align*}
The same elementary computations show that $\det(v_j\,v_{j+1})=1$ for the remaining cases.

Now we prove item 2. This follows as in 1, let us illustrate the computation for $v_6$.
\begin{align*}
L(v_6)&=L((f_{i+3}-f_{i-1},f_{i+4}-f_i))=f_{i+1}(f_{i+3}-f_{i-1})-f_{i}(f_{i+4}-f_{i})\\
&=-(f_{i}f_{i+4}-f_{i+1}f_{i+3})-(f_{i-1}f_{i+1}-f_if_i)\\
&=(-1)^{i+2}f_1f_3+(-1)^{i+1}f_1f_1\\
&=2-1=1.
\end{align*}
\end{proof}

\begin{proof}
(of Theorem \ref{Fib}) Let $\sd=\R_{\geq0}((1,0),(f_{l-1},f_{l}))$. In the next section we show that $\Ga$ is the minimal generating set of the semigroup $\sd\cap\Z^2$ (see Proposition \ref{ct fib}). In particular, $\Ga$ coincides with the set of integral points of the compact edges of $\Theta=\conv(\sd\cap\Z^2\setminus\{(0,0)\})$.

Notice that the compact edges of $\Theta$ are the segments $[(1,0),(f_1,f_2)]$ and $[(f_{i-1},f_{i}),(f_{i+1},f_{i+2})]$ for each even number $2\leq i\leq l-2$. Indeed, a direct application of Vajda's identity (\ref{Vaj}) shows that the slopes of these segments are strictly decreasing:
$$\frac{f_i-f_{i-2}}{f_{i-1}-f_{i-3}}=\frac{f_{i-1}}{f_{i-2}}>\frac{f_{i+1}}{f_i}=\frac{f_{i+2}-f_{i}}{f_{i+1}-f_{i-1}}.$$
By Proposition \ref{vertices},
$$\vn=\{(1,0)+(f_1,f_2),(f_1,f_2)+(f_3,f_4),\ldots,(f_{l-3},f_{l-2})+(f_{l-1},f_{l})\}.$$
The case $l=4$ can be verified by a direct computation. Assume $l\geq6$.
\begin{itemize}
\item Consider $(1,0)+(f_1,f_2)\in\vn$. Then
\end{itemize}
\begin{align}
A((1,0))&=\{(1,3),(f_{i-1}-1,f_{i})|6\leq i \leq l, i\mbox{ even}\},\notag\\
A((f_1,f_2))&=\{(1,2),(f_{i-1}-1,f_{i}-1)|6\leq i \leq l, i\mbox{ even}\}.\notag 
\end{align}
Hence, 
\begin{align}
\Ga((1,0),(f_1,f_2))=&\{(1,0),(1,1),(1,2),(1,3)\}\notag\\
&\cup\{(f_{i-1}-1,f_{i}),(f_{i-1}-1,f_{i}-1)|6\leq i \leq l, i\mbox{ even}\}.\notag
\end{align}
We claim that $\N\Ga((1,0),(f_1,f_2))$ can be minimally generated by the set $\{(1,0),(1,1),(1,2),(1,3)\}$. Consider first the following general fact, which can be proved by a straightforward induction: for $k\geq5$, any two consecutive Fibonacci numbers satisfy $f_k\leq2(f_{k-1}-1)$. In particular, for any $i\in\{6,\ldots,l\}$, $i$ even,
$$1<\frac{f_{i}-1}{f_{i-1}-1}<\frac{f_{i}}{f_{i-1}-1}\leq2.$$
Hence, $(f_{i-1}-1,f_{i}),(f_{i-1}-1,f_{i}-1)\in\N((1,1),(1,2))$, for any $i\in\{6,\ldots,l\}$, $i$ even. This proves the claim. Finally, by Proposition \ref{one segment}, the Nash blowup of $X_{\Ga((1,0),(f_1,f_2))}$ is non-singular.

\begin{itemize}
\item Consider $(f_{l-3},f_{l-2})+(f_{l-1},f_{l})\in\vn$.
\end{itemize}
 This case is actually symmetric to the previous one, in the following sense. We claim that there exists a linear automorphism $T$ of $\Z^2$ such that 
\begin{align}
T(f_{l-1},f_{l})=&(1,0),\notag\\
T(f_{l-3},f_{l-2})=&(1,1)=(f_1,f_2),\notag\\
T(f_{l-5},f_{l-4})=&(2,3)=(f_3,f_4),\notag\\
&\vdots\notag\\
T(f_1,f_2)=&(f_{l-3},f_{l-2}),\notag\\
T(1,0)=&(f_{l-1},f_l).\notag
\end{align}
In particular, the conclusion of the previous case also holds for this case. Define
\begin{align}
T:=
\begin{pmatrix}
f_{l-1}&	-f_{l-2}\\
f_{l}&		-f_{l-1}\notag
\end{pmatrix}.
\end{align}
Firstly, $T(1,0)=(f_{l-1},f_l)$. 
Letting $n=l-2$ and $r=s=1$, Vajda's identity (\ref{Vaj}) implies $f_{l-2}f_l-f_{l-1}^2=(-1)^{l-1}f_1f_1=-1$. Hence, $\det T=-1$ and $T(f_{l-1},f_l)=(1,0)$. Now let $k$ be an even integer, $4\leq k<l$. Using Vajda's identity we obtain $T(f_{k-1},f_{k})=(f_{l-k-1},f_{l-k})$.
\begin{itemize}
\item Consider $(f_{i-1},f_{i})+(f_{i+1},f_{i+2})\in\vn$, where $i\in\{2,\ldots,l-4\}$ is even. As in Lemma \ref{l fib}, denote $f_{-1}=1$, $f_0=0$.
\end{itemize}
Consider the following sets:
\begin{align*}
A_{i-1,i}^-&=\{(f_{k-1}-f_{i-1},f_k-f_i)\mid k\in\{0,\ldots,i-2\}\},\\
A_{i-1,i}^+&=\{(f_{k-1}-f_{i-1},f_k-f_i)\mid k\in\{i+4,\ldots,l\}\},\\
A_{i+1,i+2}^-&=\{(f_{k-1}-f_{i+1},f_k-f_{i+2})\mid k\in\{0,\ldots,i-2\}\}\\
A_{i+1,i+2}^+&=\{(f_{k-1}-f_{i+1},f_k-f_{i+2})\mid k\in\{i+4,\ldots,l\}\}.
\end{align*}
Then $A((f_{i-1},f_i))=A_{i-1,i}^-\cup A_{i-1,i}^+$ and $A((f_{i+1},f_{i+2}))=A_{i+1,i+2}^-\cup A_{i+1,i+2}^+$. Hence,
\begin{align*}
\Ga((f_{i-1},f_{i}),(f_{i+1},f_{i+2}))=&\{(f_{i-1},f_i),(f_{i+1},f_{i+2})\}\\
&\cup A((f_{i-1},f_i)) \cup  A((f_{i+1},f_{i+2})).
\end{align*}
Notice that the vectors $v_1,\ldots,v_6$ from Lemma \ref{l fib} appear in the above set. Indeed, $v_1\in A_{i+1,i+2}^-$, $v_2\in A_{i-1,i}^-$, $v_3=(f_{i-1},f_i)$, $v_4=(f_{i+1},f_{i+2})$, $v_5\in A_{i+1,i+2}^+$, and $v_6\in A_{i-1,i}^+$. We claim that 
\begin{equation}\label{v1v6}
\Ga((f_{i-1},f_{i}),(f_{i+1},f_{i+2}))\subset\R_{\geq0}(v_1,v_2,v_3,v_4,v_5,v_6).
\end{equation}
Firstly, since the sequence $\{f_j\}_{j\geq2}$ is strictly increasing, we have
\begin{align}
A_{i-1,i}^- \cup A_{i+1,i+2}^-&\subset \Z_{\leq0}\times\Z_{<0},\label{A-}\\
A_{i-1,i}^+ \cup A_{i+1,i+2}^+&\subset \Z_{>0}\times\Z_{>0}.\label{A+}
\end{align}
In particular, $v_2\in\Z_{\leq0}\times\Z_{<0}$. Moreover, $v_3\in\Z_{>0}\times\Z_{>0}$. By Lemma \ref{l fib} we know that $\det(v_2\,v_3)=1$. This implies that $(0,-1),(1,0)\in\R_{\geq0}(v_2,v_3)\subset\R_{\geq0}(v_1,\ldots,v_6)$.

Recall that $\Ga$ is the set of integral points in the compact edges of $\Theta=\conv(\sd\cap\Z^2\setminus\{(0,0)\})$, where $\sd=\R_{\geq0}((1,0),(f_{l-1},f_l))$. Putting these facts together gives:

\begin{itemize}
\item By the convexity of $\Theta$ and (\ref{A-}) we have $A_{i+1,i+2}^-\subset\R_{\geq0}(v_1,(0,-1)).$
\item By the convexity of $\Theta$ and (\ref{A+}) we have $A_{i-1,i}^+\subset\R_{\geq0}((1,0),v_6).$
\item By the convexity of $\Theta$ and (\ref{A-}) we have $A_{i-1,i}^-\subset\R_{\geq0}(v_2,(0,-1)).$
\item By the convexity of $\Theta$ and (\ref{A+}) we have $A_{i+1,i+2}^+\subset\R_{\geq0}((1,0),v_5).$
\end{itemize}
This proves (\ref{v1v6}).

Let $S=\Ga((f_{i-1},f_{i}),(f_{i+1},f_{i+2}))\setminus\{v_1,\ldots,v_6\}$. Let $L(x,y)=f_{i+1}x-f_iy$. By (\ref{v1v6}) and 2 of Lemma \ref{l fib}, it follows that $L(w)>1$ for all $w\in S$. By 1 of the same lemma, we have $S\subset\Z_{\geq0}(v_1,\ldots,v_6)$. Hence,  $\{v_1,\ldots,v_6\}$ is the minimal generating set of the semigroup generated by $\Ga((f_{i-1},f_{i}),(f_{i+1},f_{i+2}))$. Finally, by 3 of Lemma \ref{l fib} and Proposition \ref{one segment} we conclude that the Nash blowup of $X_{\Ga((f_{i-1},f_{i}),(f_{i+1},f_{i+2}))}$ is non-singular.
\end{proof}


\section{Normal toric surfaces and continued fractions}\label{sect cont frac}

It is known that the minimal generating set of the semigroup defining a normal toric surface can be described in terms of a continued fraction. In this final section we describe the continued fraction of the semigroups we studied in previous sections. We also deduce some facts on the dual
graph of the minimal resolution of the normal toric surfaces in question.

Consider a strongly convex rational polyhedral cone $\sd\subset \R^2$ in normal form, that is, it is generated by $(1,0)$ and $(p,q)$ with $p$ and $q$ relatively prime integers such that $0\leq p<q$. The minimal set of generators of $\sd\cap \Z^2$ can be described in terms of the continued fraction associated to $\frac{p}{q}$ as follows. 

Denote by $[a_1, \ldots , a_n]$ the continued fraction:
$$\frac{p}{q} = a_1 - \frac{1}{a_2 - \frac{1}{... - \frac{1}{a_n}}}.$$
We call it the continued fraction associated to $\sd\cap \Z^2$. The number $n$ is called the length of the continued fraction. Now define the integers $p_i$ and $q_i$ as follows:
\begin{align*}
p_{-1} &= 0, \,\,\,\,\,  p_0 = 1, \,\,\,\,\, p_i = a_ip_{i-1} - p_{i-2}, \, \mbox{ for } i\geq1,\\ 
q_0 &= 0, \,\,\,\,\, q_1=1, \,\,\,\,\, q_i = a_iq_{i-1} - q_{i-2}, \, \mbox{ for } i\geq2.
\end{align*}
The minimal set of generators of the semigroup $\sd\cap \Z^2$ is precisely the set of pairs $(p_i, q_i)$ for $0\leq i \leq n$ \cite[Proposition 4.9 and Theorem 4.10]{At}.

When the generators of the semigroup defining the toric surface lie in one or two segments, the continued fraction is of a special type.

\begin{prop}\label{fraction-Gamma}
Let $\sd=\R_{\geq 0}((1,0),(p,q))\subset\R^2$ be a  strongly convex cone in its normal form. Let $\Ga\subset\Z^2$ be the minimal generating set of $\sd\cap\Z^2$. 
\begin{enumerate}
\item If $\Ga$ is contained in a single segment, then the continued fraction of $\frac{p}{q}$ is of the form $[1,2, \ldots, 2]$.
\item If $\Ga$ lies in two segments then the continued fraction associated to $\frac{p}{q}$ is of the form $[1,2,\ldots,2, x,2, \ldots, 2]$. 
\end{enumerate}
In both cases, the length of the continued fractions is one less than the cardinality of $\Gamma$. 

\end{prop}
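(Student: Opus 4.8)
The plan is to read the continued fraction digits $a_i$ directly off the combinatorics of the chain of generators $u_i:=(p_i,q_i)$, $0\le i\le n$, which by the cited description \cite{At} is exactly the ordered list of lattice points lying on the compact edges of $\Theta$. The engine of the argument is the determinant identity $\det(u_{i-1}\,u_i)=1$ for all $i$, which I would establish by induction from $\det(u_0\,u_1)=\det((1,0)\,(1,1))=1$ together with the recursion $u_i=a_iu_{i-1}-u_{i-2}$: substituting the recursion into $\det(u_i\,u_{i+1})$ makes it collapse to $\det(u_{i-1}\,u_i)$. This has two consequences I will use: consecutive generators form a $\Z$-basis, so no lattice point lies strictly between $u_{i-1}$ and $u_i$; and $a_1=1$, since $a_1=\lceil p/q\rceil=1$ for $0<p<q$, whence $u_1=(a_1,1)=(1,1)$.

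The key local computation decides, for an interior index $1\le j\le n-1$, whether $u_j$ is a vertex of $\Theta$ or lies in the relative interior of a compact edge. Because the $u_i$ are precisely the lattice points of the edges in order, $u_j$ is a vertex if and only if the difference vectors $d_j:=u_j-u_{j-1}$ and $d_{j+1}:=u_{j+1}-u_j$ fail to be parallel. Writing $d_{j+1}=(a_{j+1}-1)u_j-u_{j-1}$ from the recursion and expanding, one gets $\det(d_j\,d_{j+1})=(2-a_{j+1})\det(u_{j-1}\,u_j)=2-a_{j+1}$. Using that $a_i\ge2$ for $i\ge2$ (a standard feature of the Hirzebruch–Jung expansion of a strongly convex cone, contained in the same reference), this shows that $u_j$ is a vertex of $\Theta$ if and only if $a_{j+1}\ge3$, and that $u_j$ lies in the relative interior of an edge if and only if $a_{j+1}=2$.

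From here the proposition becomes a counting statement. The number of compact edges of $\Theta$ equals the number of its vertices minus one, and the vertices are the two endpoints $u_0,u_n$ together with the interior vertices, whose number is $\#\{\,i:2\le i\le n,\ a_i\ge3\,\}$. Hence $\Theta$ has a single compact edge exactly when no digit among $a_2,\ldots,a_n$ exceeds $2$, i.e.\ the expansion is $[1,2,\ldots,2]$; and $\Theta$ has two compact edges exactly when precisely one digit $a_i=x\ge3$ occurs while all others equal $2$, i.e.\ the expansion is $[1,2,\ldots,2,x,2,\ldots,2]$. The length statement is then immediate: $\Ga=\{u_0,\ldots,u_n\}$ has $n+1$ elements while $[a_1,\ldots,a_n]$ has length $n$.

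I expect the only real subtlety — the main obstacle — to be making airtight the equivalence between collinearity of three consecutive generators and their lying on a common compact edge of $\Theta$. This rests on the identity $\det(u_{i-1}\,u_i)=1$ (no intermediate lattice points, so the generators are genuinely consecutive points of the boundary polygon) and on the convexity of $\Theta$ (so that a non-collinear triple produces a genuine corner rather than a reflex angle). Both points must be stated carefully, but they require no computation beyond the determinant identities above.
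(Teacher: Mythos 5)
Your proof is correct, but it runs in the opposite direction from the paper's and is genuinely different in structure. The paper fixes the explicit shape of $\Ga$ in each case (e.g.\ $\{(1,0),\ldots,(1,k),(1+a,k+b),\ldots,(1+ra,k+rb)\}$ for two segments) and then evaluates each digit from the formula $a_i=(p_i+p_{i-2})/p_{i-1}$ applied to the first coordinates, getting $a_1=1$, $a_i=2$ off the break, and $a_{k+1}=a+2$ at the break; this is a direct computation tailored to the one- and two-segment cases, and it has the side benefit of identifying $x=a+2$ explicitly. You instead prove a general dictionary: from $\det(u_{i-1}\,u_i)=1$ and the recursion $u_{j+1}=a_{j+1}u_j-u_{j-1}$ you get $\det(u_j-u_{j-1},\,u_{j+1}-u_j)=(2-a_{j+1})\det(u_{j-1}\,u_j)=2-a_{j+1}$, so $a_{j+1}=2$ exactly when $u_{j-1},u_j,u_{j+1}$ are collinear, and the digits $\geq 3$ among $a_2,\ldots,a_n$ enumerate the interior vertices of $\Theta$. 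Both of your computations check out, and the step you flag as the only delicate point --- that collinearity of three consecutive generators is equivalent to their lying on a common compact edge --- is indeed fine, because by \cite[Proposition 1.21]{Oda} (quoted in Section 1) the $u_i$ are, in order, \emph{all} of the lattice points on the compact edges of $\Theta$, so $u_{j-1}$ and $u_{j+1}$ necessarily lie on the edge(s) adjacent to $u_j$; note also that convexity of $\Theta$ together with your own identity already forces $a_{j+1}\geq 2$, so the appeal to the reference for that inequality is not even needed. What your route buys is generality: it shows at once that $\Ga$ lies on exactly $s$ compact edges if and only if exactly $s-1$ of the digits $a_2,\ldots,a_n$ exceed $2$, recovering both cases of the proposition (and the length count $|\Ga|=n+1$) as instances of a single statement, whereas the paper's calculation would have to be redone for each configuration.
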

\begin{proof}

Suppose that $\Ga$ is contained in two segments. Then $\Ga$ has the form
$$\{(1,0), (1,1), \ldots , (1,k), (1+a,k+b), (1+2a,k+2b),\ldots,(1+ra,k+rb)=(p,q)\},$$
for some $a,b,k\in\N_{\geq1}$ such that $(a,b)=(b,k)=1$ and $b-ak=1$. Hence, $b>ak$.

Let $\frac{p}{q}=[a_1,\ldots,a_n]$. From the definition of the $p_i$'s and $q_i$'s given above, it follows that:
$$a_1 = \frac{p_1}{p_0}, \, \,  {\rm and} \, \,  a_i = \frac{p_i + p_{i-2}}{p_{i-1}}, \, \,  {\rm for}\, \,  i\geq 2.$$ 
Moreover, since $(p_i, q_i)$ are the coordinates of the elements of $\Gamma$, we have: $p_i =1$ for $0\leq i \leq k$. 
Therefore: 
$$a_1 =1, \, \,  {\rm and}\, \,  a_i = 2 \, \, {\rm for} \, \,  2\leq i\leq k.$$ 
On the other hand, for $1\leq j \leq r$, $p_{k+j} =1+ja$.  Therefore: 
$$a_{k+1} =\frac{a+1 + p_{k-1}}{p_k} = a+2,\,\,\, a_{k+2} = \frac{2a+2}{a+1} =2,$$
and then
$$a_{k+j} = 2, \, \,  {\rm for}\, \,  2\leq j\leq r.$$
We conclude that the continued fraction associated to $\Gamma$ is 
$$[1,2, \ldots , 2, a+2, 2, \ldots , 2]$$ 
and has length $k+r$. 

When $\Gamma$ is contained in one segment, the process of computing the continued fraction is the same as above, stopping at $i=k$. In which case, 
$\frac{p}{q}= [1,2,\ldots ,2]$ and has length $k$. 
\end{proof}

Let us now compute the inverse of the above continued fraction. 

\begin{prop}\label{inverse-fraction}
Consider the continued fraction $\frac{p}{q} = [1, 2, \ldots 2, x, 2, \ldots , 2]$ such that:
\begin{itemize}
\item $x\geq 2$.
\item $x$ appears in the position $n\geq 2$.
\item The total length of the continued fraction is $m\geq 2$. If $x=2$ then $n=m$.
\end{itemize} 
Then its inverse is the continued fraction $\frac{q}{p} = [n,2, \ldots , 2, m-n+2]$, and the length of this continued fraction is $x-1$. 
 
Conversely, every continued fraction of the form $[\alpha, 2, \ldots 2, \beta]$ of length $\nu \geq1$, $\alpha , \beta \geq 2$, is the inverse of a continued fraction $[1, 2, \ldots , 2, \nu + 1, 2, \ldots, 2]$ of length $\beta$.
\end{prop}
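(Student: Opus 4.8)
The plan is to prove the forward implication by evaluating both structured continued fractions explicitly as reduced fractions and checking that one is the reciprocal of the other; the converse is then purely formal, since passing to the reciprocal of a rational number is an involution. Write $[2^k]$ for a string of $k$ consecutive $2$'s and set $t=m-n$, the number of $2$'s following $x$, so that $\frac{p}{q}=[1,2^{n-2},x,2^{t}]$. I will only need three elementary manipulation rules for the continued fractions considered here: the evaluation $[2^{k}]=\frac{k+1}{k}$; the prepend-one-entry rule $[c,c_2,\ldots]=\frac{cA-B}{A}$ whenever $[c_2,\ldots]=\frac{A}{B}$ (since $c-\frac{1}{A/B}=\frac{cA-B}{A}$); and its specialisations at $c=1$ and $c=2$.

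The computational engine is a single lemma describing how a whole block of $2$'s acts. Since prepending one $2$ sends $\frac{A}{B}$ to $\frac{2A-B}{A}$, the numerators and denominators obey the recurrence $u_k=2u_{k-1}-u_{k-2}$, whose characteristic polynomial $(\lambda-1)^2$ forces arithmetic growth; a one-line induction then gives $[2^{k},c_2,\ldots]=\frac{(k+1)A-kB}{kA-(k-1)B}$ whenever $[c_2,\ldots]=\frac{A}{B}$. Everything else reduces to substituting into this formula. First I would evaluate the tail $[x,2^{t}]=\frac{x(t+1)-t}{t+1}$ and call this $\frac{A}{B}$; prepending $n-2$ twos and then the leading $1$ then yields $p=A-B$ and $q=(n-1)A-(n-2)B$. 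Next I would apply the same two rules to the candidate inverse $[n,2^{x-3},t+2]$ and check that its denominator equals $A-B$ and its numerator equals $(n-1)A-(n-2)B$; this comes down to the two polynomial identities $(x-2)(t+2)-(x-3)=A-B$ and $(x-3)(t+2)-(x-4)=A-2B$, both of which are immediate. Because $(p,q)$ is primitive (the cone is in normal form, so $\gcd(p,q)=1$), these fractions are already reduced, whence $\frac{q}{p}=[n,2^{x-3},t+2]$, a continued fraction of length $(x-3)+2=x-1$, as claimed.

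The step I expect to require the most care is the bookkeeping of degenerate configurations, where one of the blocks of $2$'s is empty: the identity $[2^{k}]=\frac{k+1}{k}$ and the block-prepend lemma must be read correctly at $k=0$, and the case $x=2$—which the hypotheses force to satisfy $n=m$—has to be treated separately. In that case the original fraction is the single-segment $[1,2^{m-1}]=\frac{1}{m}$, whose reciprocal is $[m]=[n]$ of length $1=x-1$, in agreement with the general formula. Once these boundary conventions are fixed, the converse needs no further work: reciprocation being involutive, one simply solves $n=\alpha$, $x=\nu+1$, and $t+2=\beta$ to recover that $[\alpha,2^{\nu-2},\beta]$ is the reciprocal of $[1,2^{\alpha-2},\nu+1,2^{\beta-2}]$, which finishes the proof.
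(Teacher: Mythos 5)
Your argument is correct, and it is substantially more than the paper provides: the published proof consists of the single sentence ``the statement follows by a straightforward multi-induction on the considered values,'' so your closed-form computation is a genuine, complete argument where the paper only gestures at one. Your block lemma $[2^k,c_2,\ldots]=\frac{(k+1)A-kB}{kA-(k-1)B}$ (for $[c_2,\ldots]=\frac{A}{B}$) is exactly the right tool: with $A=x(t+1)-t$ and $B=t+1$ it gives $\frac{p}{q}=\frac{A-B}{(n-1)A-(n-2)B}$, and the two identities $(x-2)(t+2)-(x-3)=A-B$ and $(x-3)(t+2)-(x-4)=A-2B$ do check out, yielding $[n,2^{x-3},t+2]=\frac{(n-1)A-(n-2)B}{A-B}=\frac{q}{p}$. (For the reducedness of these fractions you do not even need to invoke normal form: $\gcd(A,B)=\gcd(-t,t+1)=1$ and $\gcd(A-B,(n-1)A-(n-2)B)=\gcd(A-B,A)=\gcd(B,A)=1$.) Your separate treatment of $x=2$ and of the empty blocks at $k=0$ is also the right place to be careful, and what you buy over the paper's induction is an explicit formula for $p$ and $q$ in terms of $n,x,m$ that makes both directions transparent.

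One point worth flagging: your (correct) converse computation shows that $[\alpha,2^{\nu-2},\beta]$ is the reciprocal of $[1,2^{\alpha-2},\nu+1,2^{\beta-2}]$, whose length is $\alpha+\beta-2$ with $\nu+1$ sitting in position $\alpha$; the statement's claim that this fraction has length $\beta$ holds only when $\alpha=2$ (e.g.\ $[3,3]=\frac{8}{3}$ is the reciprocal of $[1,2,3,2]=\frac{3}{8}$, of length $4$, not $3$). Since your formulas are consistent with the forward direction, this is a slip in the statement rather than in your proof, but you should say so explicitly rather than pass over it in silence.
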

\begin{proof}
The statement follows by a straightforward multi-induction on the considered values.    
\end{proof}

Now we want to describe the set $\Ga$ of Theorem \ref{Fib} in terms of continued fractions. Let $\Gamma = \{(1,0), (f_1, f_2), \ldots , (f_{l-1}, f_l)=(p,q)\}$, with $l\geq 4$ an even integer, be as in that theorem.

\begin{prop}\label{ct fib}
The continued fraction associated to $\Gamma$ is 
$$\frac{p}{q} = [1,3,\ldots , 3]$$
and its length is $l/2$. Moreover, the continued fraction of its inverse $\frac{q}{p}$ is $[2, 3,\ldots 3, 2]$ and has length $l/2$.
\end{prop}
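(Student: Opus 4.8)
The plan is to reduce everything to the convergent recurrence for continued fractions recorded in the excerpt, exploiting one elementary observation: both the odd-indexed and the even-indexed Fibonacci numbers satisfy the second-order recurrence $x_i = 3x_{i-1}-x_{i-2}$, which is exactly the recurrence obeyed by the convergents of a continued fraction all of whose entries equal $3$. Concretely, I would first record the identity $f_{m+2}+f_{m-2}=3f_m$ (immediate from $f_{m+2}=2f_m+f_{m-1}$ and $f_{m-2}=f_m-f_{m-1}$), which specializes to $f_{2i-1}=3f_{2i-3}-f_{2i-5}$ and $f_{2i}=3f_{2i-2}-f_{2i-4}$.

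For the forward fraction I set $[a_1,\ldots,a_n]=[1,3,\ldots,3]$ with $n=l/2$; this is a legitimate expansion since $a_1=1$ and $a_i=3\ge 2$ for $i\ge 2$, so by uniqueness of such expansions it suffices to show its value is $f_{l-1}/f_l$. Running the recurrence $p_{-1}=0,\ p_0=1,\ p_i=a_ip_{i-1}-p_{i-2}$ and $q_0=0,\ q_1=1,\ q_i=a_iq_{i-1}-q_{i-2}$, I would prove by induction, using the identity above and the convention $f_{-1}=1,\ f_0=0$, that $(p_i,q_i)=(f_{2i-1},f_{2i})$ for $0\le i\le n$. In particular $(p_n,q_n)=(f_{l-1},f_l)=(p,q)$, so $[1,3,\ldots,3]$ is the continued fraction associated to $\sd\cap\Z^2$, of length $l/2$. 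Since the cited result identifies the minimal generators of $\sd\cap\Z^2$ with the convergents $(p_i,q_i)$, this same computation shows that these generators are exactly the points of $\Gamma$, thereby also establishing the claim (used in the proof of Theorem \ref{Fib}) that $\Gamma$ is the minimal generating set.

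For the inverse I would carry out the identical bookkeeping for $[c_1,\ldots,c_n]=[2,3,\ldots,3,2]$ with $c_1=c_n=2$, all interior entries equal to $3$, and $n=l/2$. The interior recurrence is again $x_i=3x_{i-1}-x_{i-2}$, so the modified initial data coming from $c_1=2$ yield $P_i=f_{2i+1}$ and $Q_i=f_{2i}$ for $0\le i\le n-1$; the final step, which uses $c_n=2$, gives $P_n=2f_{l-1}-f_{l-3}=f_{l-1}+f_{l-2}=f_l$ and $Q_n=2f_{l-2}-f_{l-4}=f_{l-1}$. Hence $[2,3,\ldots,3,2]=f_l/f_{l-1}=q/p$, and since all entries are $\ge 2$ this is the continued fraction of $q/p$, again of length $l/2$.

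There is no conceptual obstacle here; the only delicate point is the bookkeeping of the boundary terms, since the leading $1$ of the forward fraction and the two flanking $2$'s of the inverse break the uniform ``all threes'' pattern. I would therefore verify the base cases of each induction separately and check the degenerate case $n=2$ (that is, $l=4$) directly, so that the symbols $f_{l-3},f_{l-4}$ and the convergents $P_{n-2},Q_{n-2}$ remain meaningful.
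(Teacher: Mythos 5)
Your proof is correct, and at bottom it is the same computation as the paper's: both arguments run on the convergent recurrences $p_i=a_ip_{i-1}-p_{i-2}$, $q_i=a_iq_{i-1}-q_{i-2}$ together with the identity $f_{m+2}+f_{m-2}=3f_m$ (the paper phrases the latter as $f_{2h+1}+f_{2h-3}=3(f_{2h-3}+f_{2h-2})$). The difference is the direction of the argument. The paper takes as given that the convergents are $p_i=f_{2i-1}$ --- equivalently, that the minimal generators of $C\cap\Z^2$ are the points of $\Gamma$ --- and solves $a_i=(p_i+p_{i-2})/p_{i-1}$ for the partial quotients; you instead posit the expansion $[1,3,\ldots,3]$, verify by induction that its convergents are $(f_{2i-1},f_{2i})$, and conclude by uniqueness of the Hirzebruch--Jung expansion (all entries $\geq 2$ past the first). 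Your direction has a concrete advantage: it proves, rather than presupposes, that the minimal generating set of $C\cap\Z^2$ is $\Gamma$, which is precisely the claim that the proof of Theorem \ref{Fib} defers to this proposition. For the inverse fraction the substance is again the same; your explicit bookkeeping of the boundary convergents $P_n=2f_{l-1}-f_{l-3}=f_l$ and $Q_n=2f_{l-2}-f_{l-4}=f_{l-1}$, and the separate check of the degenerate case $l=4$ where no interior $3$'s occur, fills in what the paper dismisses as a straightforward induction.
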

\begin{proof}
    From the description of the continued fraction $\frac{p}{q} = [a_1, \ldots a_n]$, in the beginning of this section, we have:

    $$a_i = \frac{p_i + p_{i-2}}{p_{i-1}} \, \, {\rm and}\, \,  p_i = f_{2i-1}.$$ 

    Consequently, 
    
    $$a_1 = \frac{f_1}{p_0} = 1, a_2= \frac{f_3 + 1}{1} =3.$$

    For $h\geq 2$. We have:
    
    $$a_{h+1} = \frac{p_{h+1} +p_{h-1}}{p_h}=\frac{f_{2h+1}+f_{2h-3}}{f_{2h-1}} 
    = \frac{3(f_{2h-3} + f_{2h-2})}{f_{2h-3}+f_{2h-2}}= 3.$$

The second statement of the proposition can be achieved, again,  by a straightforward induction on the length of the continued fraction. Observe that, when $n=2$, $\frac{p}{q}=[1,3]$, and $\frac{q}{p}= [2,2]$.
\end{proof}

\begin{rem}
Let us consider the dual diagram of the minimal resolution of a normal toric surface. It corresponds to curves $E_i$, $i=1,\ldots,s$, such that each curve $E_i$ is a smooth rational irreducible curve, with $E_i$ intersecting only and transversally $E_{i+1}$ and $E_{i-1}$, in one point each. This configuration is usually called a bamboo.
 
Consider the cone $\sd=\R_{\geq0}((1,0),(p,q))$ determining the toric surface $X_{\Gamma}$. The continued fraction associated to the rational number $\frac{q}{p}$ determines the self-intersections of the $E_i$'s. In other words, if $\frac{q}{p} = [b_1, \ldots , b_s]$, then $E_i^2 = - b_i$ \cite[Section 2.6]{Ful} and \cite[Section III.4]{Ba}. Propositions \ref{inverse-fraction} and \ref{ct fib} give the self-intersections of the divisors in the cases we studied.
\end{rem}

\begin{rem}
Consider a normal toric surface $X$ whose minimal resolution has as exceptional divisor $E= \bigcup_{i=1}^s E_i$, with self-intersections $E_1^2 = -\alpha\leq -2$, $E_s^2 = - \beta\leq -2$ and $E_i^2 = -2$, for $2\leq i<s$. By Proposition \ref{inverse-fraction} and Theorem \ref{main}, $X$ can be resolved by a finite iteration of Nash blowups.
\end{rem}

\vspace{.5cm}
\noindent{\footnotesize \textsc {Daniel Duarte, Centro de Ciencias Matem\'aticas, UNAM Campus Morelia.} \\ E-mail: adduarte@matmor.unam.mx}\\
{\footnotesize \textsc {Jawad Snoussi, Universidad Nacional Aut\'onoma de M\'exico, Instituto de Mate-m\'aticas, Unidad Cuernavaca.} \\ E-mail: jsnoussi@im.unam.mx}


\begin{thebibliography}{XXX}
\addcontentsline{toc}{chapter}{\numberline{References}}
\bibitem{At}Atanasov, A., Lopez, C., Perry, A., Proudfoot, N., Thaddeus, M.; \textit{Resolving toric varieties with Nash blow-ups}, Experimental Math., 20, no. 3, (2011), pp 288-303.
\bibitem{Ba} Barth, W., Peters, C., Van de Ven, A.; \textit{Compact complex surfaces}, Ergebnisse der Mathematik und ihrer Grenzgebiete, Springer-Verlag, 1984.
\bibitem{CDLL} Castillo, F., Duarte, D., Leyton-Alvarez, M., Liendo, A.; {\em Nash blowups fails to resolve singularities in dimension four or higher}, arXiv:2409:04688, (2024).
\bibitem{CLS}Cox, D., Little, J., Schenck, H.; \textit{Toric Varieties}, Graduate Studies in Mathematics, Vol. 124, AMS, 2011.
\bibitem{DDS} Dalbelo, T. M., Duarte, D., Soares R. M. A.; {\em Nash blowups of 2-generic determinantal  varieties in positive characteristic}, Pure and Applied Math. Quat., Vol. 21, No. 4, (2025), pp. 1557-1575.
\bibitem{D1}Duarte, D.; \textit{Nash modification on toric surfaces}, Revista de la Real Academia de Ciencias Exactas, F\'isicas y Naturales, Serie A Matem\'aticas, Vol. 108, No. 1, (2014), pp 153-171.
\bibitem{DG} Duarte, D., Green Tripp, D.; {\em Nash modification on toric curves}, Singularities, Algebraic Geometry, Commutative Algebra, and Related Topics, Springer Nature Switzerland AG, G.-M. Greuel, L. Narv\'aez Macarro, S. Xamb\'o-Descamps (eds), doi: 10.1007/978-3-319-96827-8-8, (2018), pp 191-202.
\bibitem{DN1} Duarte, D., N\'u\~nez-Betancourt, L.; \textit{Nash blowups in positive characteristic}, Revista Matem\'atica Iberoamericana, Vol. 38 (1), (2022), doi: 10.4171/RMI/1278, pp. 257-267.
\bibitem{DJNB}Duarte, D., Jeffries, J., N\'u\~nez-Betancourt, L.; \textit{Nash blowups of toric varieties in prime characteristic}, Collectanea Mathematica, Vol. 75, doi:10.1007/s13348-023-00402-y, (2024), pp. 629–637.
\bibitem{EGZ}Ebeling, W., Gusein-Zade, S. M.; \textit{On the indices of 1-forms on determinantal singularities}, Tr. Mat. Inst. Steklova, 267 (2009) pp 119–131.
\bibitem{Ful} Fulton, W.; \textit{Introduction to toric varieties}, Annals of Mathematics Studies. 132, Princeton, NJ, Princeton University Press (1993).
\bibitem{GS1}Gonzalez-Sprinberg, G.; \textit{Eventails en dimension 2 et transform\'{e} de Nash}, Publ. de l'E.N.S., Paris (1977), pp 1-68.
\bibitem{GS2}Gonzalez-Sprinberg, G.; \textit{R\'{e}solution de Nash des points doubles rationnels}, Ann. Inst. Fourier, Grenoble, Vol. 32, No. 2 (1982), pp 111-178.
\bibitem{GS3} Gonzalez-Sprinberg, G.; \textit{On Nash blow-up of orbifolds}, Adv. Studies in Pure Math. 56 (2009), Singularities-Niigata-Toyama (2007), pp 133-149.
\bibitem{GT}Gonz\'alez Perez, P. D., Teissier, B.; \textit{Toric geometry and the Semple-Nash modification}, Revista de la Real Academia de Ciencias Exactas, F\'isicas y Naturales, Serie A, Matem\'aticas, Vol. 108, No. 1, (2014), pp 1-48.
\bibitem{GM}Grigoriev, D., Milman, P.; \textit{Nash resolution for binomial varieties as Euclidean division. A priori termination bound, polynomial complexity in essential dimension 2}, Advances in Mathematics, Vol. 231, (2012), pp 3389-3428. 
\bibitem{Hi}Hironaka, H.; \textit{On Nash blowing-up}, Arithmetic and Geometry II, Progr. Math., Vol 36, Birkhauser Boston, Mass., (1983), pp 103-111.
\bibitem{No}Nobile, A.; \textit{Some properties of the Nash blowing-up}, Pacific Journal of Mathematics, Vol. 60, (1975), pp 297-305.
\bibitem{Oda}Oda, T.; \textit{Convex bodies and algebraic geometry}, Ergebnisse der Mathematik und ihrer Grenzgebiete (3), Vol. 15, Springer-Verlag, Berlin, 1988.
\bibitem{R}Rebassoo, V.; \textit{Desingularisation properties of the Nash blowing-up process}, Thesis, University of Washington (1977).
\bibitem{S}Semple, J. G.; \textit{Some investigations in the geometry of curve and surface elements}, Proc. London Math. Soc. (3) \textbf{4} (1954), pp 24-49.
\bibitem{Sp}Spivakovsky, M.; \textit{Sandwiched singularities and desingularisation of surfaces by normalized Nash transformations}, Ann. of Math. (2), Vol. 131, No. 3, (1990), pp 411-491.
\bibitem{Sp2}Spivakovsky, M; \textit{Resolution of Singularities: an Introduction}, Handbook of Geometry and Topology of Singularities I, Springer International Publishing, pp.183-242, 2020.
\bibitem{St}Sturmfels, B.; \textit{Gr\"obner Bases and Convex Polytopes}, University Lecture Series, Vol. 8, American Mathematical Society, Providence, RI, 1996.
\bibitem{V}Vajda, S.; \textit{Fibonacci and Lucas Numbers, and the Golden Section: Theory and Applications}, Dover Publications, Inc, New York, 1989.
\end{thebibliography}
\end{document}